\documentclass{amsart}
\usepackage{latexsym,amsmath}
\usepackage{amsthm,amssymb,mathrsfs}

\newcommand{\Aut}{\mathrm{Aut}}
\newcommand{\degree}{\mathrm{degree}}

\newcommand{\RR}{\mathbb{R}} 
\newcommand{\CC}{\mathbb{C}} 

\newcommand{\beq}{\begin{eqnarray*}}
\newcommand{\eeq}{\end{eqnarray*}}
\newcommand{\beg}{\begin{equation}}
\newcommand{\eeg}{\end{equation}}

\newtheorem{thm}{Theorem}[section]

\newtheorem{lem}[thm]{Lemma}
\newtheorem{prop}[thm]{Proposition}
\theoremstyle{definition}
\newtheorem{definition}[thm]{Definition}
\theoremstyle{remark}
\newtheorem{remark}[thm]{Remark}
\newtheorem{example}[thm]{Example}

\numberwithin{equation}{section}

\title{New examples of proper holomorphic maps among symmetric domains}
\author{Aeryeong Seo}

\address{School of Mathematics,
Korea Institute for Advanced Study (KIAS)
85 Hoegiro, Dongdaemun-gu, Seoul 130-722, Korea}
\email{Aileen83@kias.re.kr}

\subjclass[2010]{32M15, 32H35}
\keywords{Bounded symmetric domain, proper holomorphic map}

\begin{document}
\maketitle

\markboth{Aeryeong Seo}{New examples of proper holomorphic maps among symmetric domains}

\section{Introduction}

Let $\Omega_{r,s}$ be a bounded symmetric domain of type I which is defined by
$$
\Omega_{r,s} = \{ Z\in M(r,s,\mathbb{C}) : I_{r,r} -  Z Z^* >0\}.
$$
Here we denote by $>0$ positive definiteness of square matrices, by $M(r,s,\mathbb{C})$ the set of $r\times s$ complex matrices and by $I_{r,r}$ the $r\times r$ identity matrix.
Let  $D_{r,s}$ be a generalized ball which is defined by
$$
D_{r,s} = \{ [z_1, \dots, z_{r+s}]\in \mathbb{P}^{r+s-1} : |z_1|^2 + \dots +|z_r|^2 > |z_{r+1}|^2 + \dots + |z_{r+s}|^2 \}.
$$

\begin{definition}
\begin{enumerate}
\item Let $f,\,g : \Omega_1 \rightarrow \Omega_2$ be holomorphic maps between domains $\Omega_1,\, \Omega_2$.
We say $f$ and $g$ are equivalent if and only if $f = A\, \circ\, g \,\circ B$ for some $B\in \Aut(\Omega_1)$ and $A\in \Aut(\Omega_2)$.
\item Let $g_1,\, g_2 : \mathbb{P}^{n} \rightarrow \mathbb{P}^{N}$ be rational maps. We say $g_1$ and $g_2$ are rationally equivalent if there is a rational map $g: \mathbb{P}^{n} \rightarrow \mathbb{P}^{N}$ such that $g$ is a common extension of $g_1$ and $g_2$.
\end{enumerate}
\end{definition}
The aim of this paper is presenting a simple way to generate proper monomial rational
maps between generalized balls and via the relations between generalized balls and
bounded symmetric domains of type I given in \cite{Ng13}, giving new examples of proper holomorphic
maps between bounded symmetric domains of type I.

Consider a proper rational map $g: D_{r,s}\rightarrow D_{r',s'}$.
In homogeneous coordinate, put $g([z_1, \cdots,z_{r+s}] )  = [g_1,\cdots,g_{r'+s'}]$. Suppose that $g_i$ are  monomials in $z_1,\dots, z_{r+s}$ for each $i, \,1\leq i\leq r'+s'$. Then we can define the homogeneous polynomial $P : \RR^{r+s} \rightarrow \RR$ satisfying
\begin{equation}\label{P}
P(|z_1|^2,\dots,|z_{r+s}|^2) = \sum_{k=1}^{r'} |g_k|^2-\sum_{k=r'+1}^{r'+s'} |g_k|^2.
\end{equation}
Since $g$ is proper, $P(x)=0$ whenever $\sum_{j=1}^r x_j=\sum^{r+s}_{j=r+1} x_j$. Hence $P$ should be of the form
\begin{eqnarray} \label{form of P}
\left(\sum_{j=1}^r x_j - \sum^{r+s}_{j=r+1} x_j \right)^m Q_P(x)
\end{eqnarray}
for some positive integer $m$ and homogeneous polynomial $Q_P(x)$.
\begin{thm}\label{classify ball}
Let $g: D_{r,r} \rightarrow D_{r+1,r+1},\, (r\geq 2)$ be a proper monomial rational map. Then $g$ is rationally equivalent to one of the following up to automorphisms of $D_{r,r}$ and $D_{r+1,r+1}:$
\begin{enumerate}
\item  In case of $\degree(g)=1 :$ \\ $g([z_1,\dots, z_{2r}]) = [z_1,\dots, z_{r},\phi(z), z_{r+1},\dots, z_{2r},\phi(z)]$ where $\phi(z)$ is a degree one homogeneous polynomial in $z_1,\dots, z_{2r}$ \label{standard}
\item In case of $\degree(g)=2 :$
\begin{enumerate}
\item $g([z_1,z_2,z_3,z_4]) = [z_1^2, z_1z_2, z_2z_3, z_3^2 ,z_1z_4,  z_3z_4]$ \label{me}
\item $g([z_1,z_2,z_3,z_4]) = [z_1^2, \sqrt{2} z_1z_2, z_2^2, z_3^2 ,\sqrt{2} z_3z_4,  z_4^2]$ \label{ng}
\end{enumerate}
\item In case of $\degree(g) \geq 3 :$ if  $Q_P(x)$ has degree 1 or,
the coefficients of the polynomial $Q_P(x)$ are nonnegative, there is no proper monomial rational map.
\end{enumerate}
\end{thm}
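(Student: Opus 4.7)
The approach is to analyze the polynomial identity $P = L^m Q_P$ with $L(x) := \sum_{j=1}^{r} x_j - \sum_{j=r+1}^{2r} x_j$ and $d := m + \deg Q_P = \deg g$, combined with the constraint that every monomial $x^\alpha$ of $P$ with positive (resp.\ negative) coefficient must be realized as $|g_k|^2$ for some $g_k$ on the positive (resp.\ negative) side of $D_{r+1,r+1}$. Since each side has only $r+1$ components and each $g_k$ is a single monomial, $P$ supports at most $r+1$ distinct positive monomial types and at most $r+1$ distinct negative ones. Equivalence of $g$ is governed by $\Aut(D_{r,r})\times\Aut(D_{r+1,r+1})$; the block-diagonal subgroup $(U(r)\times U(r))\times(U(r+1)\times U(r+1))$ preserves monomial structure and provides permutation--rescaling freedom within each block of input and output coordinates.

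For $d=1$, $Q_P$ is a scalar and $P = cL$. Each $g_k = \lambda_k z_{i_k}$, and matching coefficients in $P = cL$ forces the positive output components to cover $z_1,\dots,z_r$ with the appropriate weights, possibly paired with an extra monomial $\phi$ appearing simultaneously on both sides so as to cancel in $P$; normalizing by the block-diagonal action yields form (1). For $d=2$, $(m,\deg Q_P)\in\{(2,0),(1,1)\}$. The case $m=2$ is ruled out at once: expanding $L^2$ yields $r(r+1)$ distinct positive monomials (the $2r$ squares plus $r(r-1)$ same-side cross products) and $r^2$ distinct negative monomials (mixed cross products), both exceeding $r+1$ when $r\ge 2$.

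For $(m,\deg Q_P)=(1,1)$, write $Q_P = \sum_i a_i x_i$; the coefficient of $x_i^2$ in $LQ_P$ is $\pm a_i$ according to whether $i\le r$ or $i>r$, the coefficient of a same-side pair $x_ix_j$ is $\pm(a_i+a_j)$, and the coefficient of a mixed pair $x_ix_j$ (with $i\le r<j$) is $a_j-a_i$. Imposing the $r+1$ bound on both sides, and using the block-diagonal freedom to order the $a_i$'s within each block, one checks that the only surviving configurations are $Q_P = x_{i_0}+x_{j_0}$ with $i_0\le r<j_0$, giving map (me), and $Q_P = x_1+\cdots+x_{2r}$, giving map (ng); both force $r=2$ by a direct count (otherwise positive-monomial totals such as $r+\binom{r}{2}$ already overshoot $r+1$).

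For $d\ge 3$ I handle the two alternatives of (3) separately. If $\deg Q_P = 1$, then $m=d-1\ge 2$; the coefficient of $x_k^d$ in $L^{d-1}Q_P$ equals $a_k\cdot(\pm 1)^{d-1}$, nonzero whenever $a_k\ne 0$, which forces $|\mathrm{supp}(Q_P)|$ pure-power monomials distributed between the sides per the parity of $d$, and a parallel analysis of near-pure monomials $x_k^{d-1}x_\ell$ pushes the distinct-monomial counts past $r+1$ on at least one side. If instead $Q_P$ has nonnegative coefficients, then positive monomials of $L^m$ cannot be cancelled by the nonneg contributions from $Q_P$: in particular, the positive content of $L^m$ supported on indices $\{1,\dots,r\}$ alone produces $\binom{r+m-1}{m}$ positive monomials whose descendants in $P$ remain positive, and for $m\ge 2$, $r\ge 2$ this (or its analogue on the negative side) overflows $r+1$. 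The principal obstacle is the degree-$2$, $m=1$ casework: sifting every sign-and-support pattern of $(a_1,\dots,a_{2r})$ modulo the block-unitary normalization, and in particular ruling out exotic configurations where balanced mixed terms $a_i=a_j$ produce fortuitous cancellations, is where the bulk of the argument concentrates.
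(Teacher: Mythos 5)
Your overall strategy is the same as the paper's: write $P=L^mQ_P$ with $L=\sum_{j\le r}x_j-\sum_{j>r}x_j$, bound the number of distinct monomials of $P$ by the number of components of $g$ (you use at most $r+1$ per sign, the paper uses $n(P)\le 2r+2$), and settle the case $(m,\deg Q_P)=(1,1)$, $r=2$ by the explicit coefficient analysis that is Lemma \ref{degree1poly}. Your treatment of degrees $1$ and $2$ is essentially the paper's argument.

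The genuine gap is in the nonnegative-coefficient branch of case (3). The mechanism you invoke --- that positive monomials of $L^m$ ``cannot be cancelled by the nonneg contributions from $Q_P$'', so that descendants of the part of $L^m$ supported on $x_1,\dots,x_r$ stay positive in $P$ --- is false: a monomial $\nu$ of $P$ generally admits several factorizations $\nu=\alpha\mu$ with $\alpha$ a monomial of $L^m$ and $\mu$ a monomial of $Q_P$, carrying signs $(-1)^{\deg_T\alpha}$ that can cancel even when all coefficients of $Q_P$ are nonnegative. Concretely, for $r=2$, $m=1$, $Q_P=x_1x_2+x_1x_3$, the monomial $x_1x_2x_3=x_2\cdot(x_1x_3)$ is a descendant of the positive monomial $x_2$ of $L$ yet has coefficient $1-1=0$ in $LQ_P$; for $m=2$, $Q_P=x_1x_3+2x_3x_4$ likewise kills $x_1^2x_3x_4=x_1^2\cdot(x_3x_4)$ in $L^2Q_P$. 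The paper's Lemmas \ref{m>1} and \ref{nonnegative} avoid this by expanding $P$ in powers of a single variable and counting only extremal blocks ($A_0B^m$, the top power of $x_1$, and the coefficient $mb_1A_0+A_1B$, where all signs agree and no cross-cancellation can occur). Moreover, even granting your claim, the numbers do not close the case: $\binom{r+m-1}{m}=3=r+1$ at $(r,m)=(2,2)$, so nothing overflows there, and your sentence covers only $m\ge 2$, leaving the sub-case $m=1$, $\deg Q_P\ge 2$ with nonnegative coefficients (which occurs for every $\deg g\ge 3$) entirely untreated --- that is exactly the situation Lemma \ref{nonnegative} is designed for. A smaller point: in the $(1,1)$ case you classify the admissible $Q_P$ for general $r$ and only then deduce $r=2$; the paper's Lemma \ref{degree r poly}(2) disposes of $r\ge 3$ outright via $n(P)\ge 4r-2>2r+2$, which spares the general-$r$ sign analysis you leave to ``one checks''.
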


The condition in Theorem \ref{classify ball} about $Q_P$ are due to combinatorial method counting monomials in expansion of multiplied polynomial.

The method to characterize proper monomial rational maps originally comes from
J. P. D'Angelo in [1]. He studied proper monomial holomorphic maps from
the unit ball to the higher dimensional unit ball via characterizing the polynomials which
can be obtained by taking Euclidean norm on proper maps. By characterizing these
polynomials, he obtained complete list of proper monomial holomorphic maps from the two
dimensional unit ball to the four dimensional unit ball. In this paper, we modify this polynomial
which is appropriate to proper monomial rational maps between generalized balls and characterize
the polynomial by counting the number of monomials in the polynomial.

For bounded symmetric domains of rank at least 2, properties of proper holomorphic maps are deeply related to special kind of totally geodesic subspaces of given domains which are called \textit{invariantly geodesic subspaces}. These are  totally geodesic submanifolds with respect to the Bergman metric which are still totally geodesic under the action of automorphisms of the compact dual of ambient domain.
Invariantly geodesic subspaces first appeared in \cite{MokTsai} as far as the author knows. These subspaces  play important roles to characterize proper holomorphic maps between bounded symmetric domains. Especially N. Mok and I. H. Tsai proved that proper holomorphic maps between irreducible bounded symmetric domains preserve the maximal characteristic subspaces which are also invariantly geodesic subspaces. Based on \cite{MokTsai, Tsai}, the rigidity of irreducible bounded symmetric domains have been developed and incorporated by Z. Tu \cite{Tu02_1,Tu02_2} and S. C. Ng \cite{Ng13, Ng15}. Especially, Ng \cite{Ng13} found that generalized balls in the projective spaces parametrize the maximal invariantly geodesic subspaces of bounded symmetric domains of type I and we use this relation to find several examples of proper holomorphic maps between bounded symmetric domains of type I.

Consider the subspaces in $\Omega_{r,s}$ of the form $$L_{[A,B]} = \{Z\in \Omega_{r,s} : AZ = B\}$$ where $A\in M(1,r,\mathbb{C})$, $b\in M(1,s,\mathbb{C})$ satisfying $[A,B]\in D_{r,s}$ which are totally geodesic under the action of $SL(r+s,\CC)$.
These are the maximal invariantly geodesic subspaces.
For $X = [A,B] \in D_{r,s}$, denote $X^\# = L_X$.

For a proper holomorphic map $ f: \Omega_{r,r}\rightarrow \Omega_{r+1,r+1},\,(r\geq 2)$
which preserves the maximal invariantly geodesic subspaces,
there is a proper holomorphic map $g : D_{r,r}\rightarrow D_{r+1,r+1}$ such that $f(X^\#) \subset g(X)^\#$
 for generic $X\in D_{r,r}$.

\begin{thm}\label{main theorem}
Let $ f: \Omega_{r,r}\rightarrow \Omega_{r+1,r+1},\,(r\geq 2)$ be a proper holomorphic map.
Suppose that $f$ preserves the maximal invariantly geodesic subspaces
and an induced proper holomorphic map $g : D_{r,r}\rightarrow D_{r+1,r+1}$ satisfies the conditions in Theorem \ref{classify ball}.
Then
 $f$ is equivalent to one of the following$\colon$
\begin{enumerate}
\item $f(Z) = \left(
           \begin{array}{cc}
             Z & 0 \\
             0 & h(Z) \\
           \end{array}
         \right)$ for $Z\in \Omega_{r,r}$ \\and for some holomorphic map $h: \Omega_{r,r}\rightarrow \Delta = \{z\in \CC : |z| <1\}$. \label{standard map}
\item  $f\left(\left(
         \begin{array}{cc}
           z_1 & z_2 \\
           z_3 & z_4 \\
         \end{array}
       \right)
    \right) = \left(
           \begin{array}{ccc}
             z_1^2 & z_1z_2& z_2  \\
             z_1z_3 & z_2z_3& z_4  \\
             z_3 & z_4& 0  \\
           \end{array}
         \right)$,\,\,\,
for $\left(
         \begin{array}{cc}
           z_1 & z_2 \\
           z_3 & z_4 \\
         \end{array}
       \right) \in \Omega_{2,2}$ \label{map me}
\item $f\left(\left(
         \begin{array}{cc}
           z_1 & z_2 \\
           z_3 & z_4 \\
         \end{array}
       \right)
    \right) = \left(
           \begin{array}{ccc}
             z_1^2 & \sqrt{2}z_1z_2 & z_2^2 \\
             \sqrt{2} z_1z_3 & z_1z_4+z_2z_3 & \sqrt{2}z_2z_4 \\
             z_3^2 & \sqrt{2}z_3z_4 & z_4^2 \\
           \end{array}
         \right)$ \label{map ng}
\end{enumerate}
 \end{thm}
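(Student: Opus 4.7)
The plan is to exploit the explicit description $X^{\#} = L_{[A,B]} = \{Z\in \Omega_{r,r} : AZ = B\}$ of the maximal invariantly geodesic subspaces together with the hypothesis $f(X^{\#})\subset g(X)^{\#}$. Letting $g([A,B]) = [A',B']$, where the entries of $A'$ and $B'$ are explicit monomials in the entries of $(A,B)$ coming from Theorem \ref{classify ball}, the inclusion becomes the functional identity
\begin{equation*}
A'\,f(Z) = B' \qquad \text{whenever } B = AZ.
\end{equation*}
After substituting $B = AZ$, both $A'$ and $B'$ are polynomial in the entries $a_1,\dots, a_r$ of $A$ with coefficients holomorphic in $Z$; matching coefficients of each monomial in $A$ converts the identity into a linear system for the entries of the $(r+1)\times(r+1)$ matrix $f(Z)$, which can be solved column by column.

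For the degree-$2$ cases (2)(a) and (2)(b) (with $r=2$), substituting the explicit monomial forms of $g$ into the identity, each of the three column equations yields three linear equations (matching coefficients of $a_1^2$, $a_1 a_2$, $a_2^2$) in the three unknown column entries. Solving directly produces a matrix in $(z_1,z_2,z_3,z_4)$ that agrees with the formula in (2) or (3) of the main theorem up to a permutation of the target coordinates; such a permutation is an automorphism of $\Omega_{3,3}$, so after composition $f$ takes the listed form.

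For the linear case (1), writing $\phi(A,B) = \sum_i \alpha_i a_i + \sum_j \beta_j b_j$ and substituting $b_j=(AZ)_j$, the column equations for $j\leq r$ reduce to $\sum_i a_i\bigl(f_{ij}(Z) - Z_{ij} + c_i(Z) f_{r+1,j}(Z)\bigr) = 0$ with $c_i(Z)=\alpha_i+\sum_j Z_{ij}\beta_j$, forcing $f_{ij}(Z)=Z_{ij}-c_i(Z)f_{r+1,j}(Z)$ on the upper $r\times r$ block, while the last column equation gives $f_{i,r+1}(Z)=c_i(Z)(1-h(Z))$ with $h(Z):=f_{r+1,r+1}(Z)$. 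The remaining freedom, namely the bottom row $(f_{r+1,j})_{j\leq r}$, is a gauge that can be absorbed by an automorphism of $\Omega_{r+1,r+1}$, while source automorphisms of $D_{r,r}$ normalize $\phi$. Together these reductions bring $f$ into the block-diagonal form of (1), with $h\colon\Omega_{r,r}\to\Delta$ holomorphic, the estimate $|h|<1$ being forced by $f(Z)\in \Omega_{r+1,r+1}$.

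The main obstacle will be case (1): disentangling the genuine holomorphic freedom (the single function $h$) from the gauge freedom introduced by $\phi$ and the bottom row of $f(Z)$. This requires identifying explicit automorphisms of $\Omega_{r+1,r+1}$ absorbing the gauge together with the $SU(r,r)$-action on $D_{r,r}$ to normalize $\phi$ to a canonical form such as a single coordinate. For the degree-$2$ cases the computations are light but one must identify the correct column permutation realizing the equivalence to the exact form listed in the theorem.
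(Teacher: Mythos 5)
Your treatment of the degree-two cases is essentially the paper's: expand the identity $g_1([A,AZ])\,f(Z)=g_2([A,AZ])$ in the monomials $a_1^2,\,a_1a_2,\,a_2^2$ and solve the resulting linear system for the entries of $f(Z)$; this is exactly how the paper derives \eqref{map me} and \eqref{map ng}.

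The gap is in case (1). After solving the linear system you are left with the parametrization $f_{ij}=Z_{ij}-c_i f_{r+1,j}$, $f_{i,r+1}=c_i(1-h)$, with the bottom row $(f_{r+1,j})_{j\le r}$ undetermined, and you propose to treat this bottom row as ``gauge'' absorbable by an automorphism of $\Omega_{r+1,r+1}$. This cannot work: the entries $f_{r+1,j}(Z)$ are a priori arbitrary holomorphic functions of $Z$, whereas an automorphism of the target is one fixed M\"obius-type transformation; post-composing with it cannot remove a nonconstant holomorphic function sitting in the bottom row. The paper closes this case with a step you are missing: it evaluates the positivity condition $V\bigl(I-f(Z)f(Z)^*\bigr)V^*\ge 0$ at points $Z$ of the Shilov boundary with $V=(v,1,0)$ and lets $v\to\infty$; the leading term is $-v^2\bigl(|L|^2+|M|^2+|N|^2\bigr)$, which forces the undetermined functions to vanish on the Shilov boundary and hence identically. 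This has two consequences your argument does not deliver: first, when the linear form $\phi$ is nonzero the forced vanishing leaves an entry equal to $1$ in the last column, so there is \emph{no} proper holomorphic $f$ at all in that subcase (a conclusion absent from your proposal); second, when $\phi=0$ the same boundary argument kills the off-diagonal block $k(Z)$ and yields exactly the block-diagonal form \eqref{standard map}. Without this Shilov-boundary positivity argument, case (1) is not established.
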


Here is the outline of the paper.
Section 2 introduces some basic terminology, well-known facts and the invariantly geodesic subspaces.
In section 3, we modify D'Angelo's method to proper monomial maps between generalized balls
and classify the maps which are needed to sort proper holomorphic maps between bounded symmetric domains of type I. We count the number of monomials in homogeneous polynomial which is multiplied by two homogeneous polynomials.
In Section 4, we present a way to generate proper holomorphic maps from $\Omega_{r,s}$ to $\Omega_{r',s'}$ and prove Theorem \ref{main theorem}. Furthermore we give more examples which are interesting.

\subsection*{Acknowledgement}{The author thanks to professor Ngaiming Mok for introducing the problem. The author would like to thank professors Sui-Chung Ng and Sung-yeon Kim for invaluable advice and discussion on this work. Especially Ng first found the map \eqref{map ng} in Theorem \ref{main theorem}. This research was supported by National Researcher Program of the National Research Foundation (NRF)
funded by the Ministry of Science, ICT and Future Planning(No.2014028806).
}

\section{Preliminary}
\subsection{Basic facts and Terminology}
 At first, we introduce terminology and some facts. For more detail, see \cite{Ng13, MokTsai}.
Let $G_{r,s}$ be the Grassmannian of $r$-planes in $r+s$ dimensional complex vector space $\mathbb{C}^{r+s}$ which is the compact dual of $\Omega_{r,s}$. For $X\in M(r,r+s,\mathbb{C})$ of rank $r$, denote $[X]$ an $r$-plane in $\mathbb{C}^{r+s}$ which is generated by row vectors of $X$. For each element $Z$ in $\Omega_{r,s}$, there corresponds an $r$-plane $[I_{r,r},Z] \in G_{r,s}$. This is the Borel embedding of $\Omega_{r,s}$ into $G_{r,s}$. It is clear that $SL(r+s,\mathbb{C})$ acts holomorphically and transitively $G_{r,s}$. Denote $SU(r,s)$ the subgroup of $SL(r+s,\CC)$ satisfying $ M \left(
                                 \begin{array}{cc}
                                   -I_{r,r} & 0 \\
                                   0 & I_{s,s} \\
                                 \end{array}
                               \right) M^* = \left(
                                 \begin{array}{cc}
                                   -I_{r,r} & 0 \\
                                   0 & I_{s,s} \\
                                 \end{array}
                               \right)$ for all $M\in SU(r,s)$.
Then $SU(r,s)$ is the automorphism group of $\Omega_{r,s}$. If we put $M = \left(
                                 \begin{array}{cc}
                                   A & B \\
                                   C & D \\
                                 \end{array}
                               \right)$ where $A\in M(r,r,\CC)$, $B \in M(r,s,\CC)$, $C\in M(s,r,\CC)$,
                               $D\in M(s,s,\CC)$,
$M$ acts on $\Omega_{r,s}$ by $Z\mapsto (A + ZC)^{-1}(B + ZD)$. From now on, if we write
$M = \left(
                                 \begin{array}{cc}
                                   A & B \\
                                   C & D \\
                                 \end{array}
                               \right)\in SU(r,s)$, without ambiguity, $A,\,B,\,C,\,D$ are block matrices of the above form.
\subsection{Invariantly geodesic subspaces in $\Omega_{r,s}$}

Consider a complex submanifold $S$ in $\Omega_{r,s}$. For every $g\in SL(r+s,\mathbb{C})$ such that $g(S)\cap \Omega_{r,s}\neq \emptyset$, if the submanifold $g(S)\cap \Omega_{r,s}$ is totally geodesic in $\Omega_{r,s}$ with respect to the Bergman metric of $\Omega_{r,s}$,  then $S$ is called \textit{invariantly geodesic subspace} of $\Omega_{r,s}$. In particular, for $W\in \Omega_{r',s'}$ with $r'\leq r$ and $s'\leq s$, the image of the embedding
$\mathrm{i} : W \mapsto  \left(
                      \begin{array}{cc}
                        0 & 0 \\
                        0 & W \\
                      \end{array}
                    \right) \in \Omega_{r,s}$
is an invariantly geodesic subspace of $\Omega_{r,s}$. The totally geodesic subspaces which are equivalent under the action of $SU(r,s)$ to $\mathrm{i}(\Omega_{r,s})$ in $\Omega_{r,s}$ are called $(r',s')$\textit{-subspaces} of $\Omega_{r,s}$. Among these $(r',s')$-subspaces, the maximal invariantly geodesic subspaces are parametrized by the generalized ball in $\mathbb{P}^{r+s-1}$.
\begin{prop} [\cite{Ng13}]
The subspaces of the form
\begin{eqnarray}\label{invariantly geodesic}
L = \{Z\in \Omega_{r,s} : AZ = B\}
\end{eqnarray}
where $A\in M(1,r,\mathbb{C})$, $B\in M(1,s,\mathbb{C})$ satisfying $[A,B]\in D_{r,s}$ are $(r-1,s)$-subspaces.
\end{prop}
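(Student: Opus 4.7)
The plan is to reduce the general case to the standard one via the $SU(r,s)$-action on $D_{r,s}$, which is transitive. First I would reinterpret the defining equation through the Borel embedding $Z \mapsto [I_r, Z] \in G_{r,s}$. Since $\lambda[I_r, Z] = (\lambda, \lambda Z)$ for $\lambda \in \CC^r$, the equation $AZ = B$ is equivalent to saying that the row vector $(A, B) \in \CC^{r+s}$ lies in the row span of $[I_r, Z]$. Thus $L_{[A,B]}$ is exactly the set of $Z \in \Omega_{r,s}$ whose corresponding $r$-plane contains the fixed line $\CC \cdot (A, B)$.

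Next I would track the $SU(r,s)$-action on these data. For $M = \bigl(\begin{smallmatrix} \alpha & \beta \\ \gamma & \delta \end{smallmatrix}\bigr) \in SU(r,s)$, the action on $G_{r,s}$ is right multiplication $[X] \mapsto [XM]$, which on $\Omega_{r,s}$ reproduces the formula $Z \mapsto (\alpha + Z\gamma)^{-1}(\beta + Z\delta)$. If $(A, B) \in [X]$, then $(A, B)M \in [XM]$, so $M$ maps $L_{[A,B]}$ onto $L_{[(A,B)M]}$. Moreover, the induced action $[A, B] \mapsto [(A, B)M]$ preserves $D_{r,s}$, because $(A,B) MJM^* (A,B)^* = (A,B) J (A,B)^*$ for $J = \mathrm{diag}(I_r, -I_s)$, which is exactly the defining form of $D_{r,s}$.

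Then I would verify the model case. Taking $(A, B) = e_1 := (1, 0, \dots, 0)$, the equation $e_1 Z = 0$ forces the first row of $Z$ to vanish; writing $Z = \bigl(\begin{smallmatrix} 0 \\ W \end{smallmatrix}\bigr)$ with $W \in M(r-1, s, \CC)$, the condition $I_r - ZZ^* > 0$ reduces to $I_{r-1} - WW^* > 0$, so $L_{[e_1]} = \mathrm{i}(\Omega_{r-1, s})$, the standard $(r-1, s)$-subspace. Combined with the preceding step, the proposition reduces to finding, for any $[A, B] \in D_{r,s}$, an element $M \in SU(r,s)$ with $(A, B)M$ proportional to $e_1$, which will carry $L_{[A,B]}$ onto $\mathrm{i}(\Omega_{r-1, s})$.

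The main (and essentially only nontrivial) step is this transitivity claim: $SU(r,s)$ acts transitively on positive lines in $\mathbb{P}^{r+s-1}$. After rescaling so that $AA^* - BB^* = 1$, one wants to realize $(A, B)$ as the first row of some $N \in SU(r,s)$; then $M := N^{-1}$ does the job since $(A, B) N^{-1} = e_1$. This amounts to extending the positive unit vector $(A, B)$ to a $J$-orthonormal basis of $\CC^{r+s}$ of signature $(r, s)$ and stacking the remaining vectors as rows, which is a standard application of Witt's extension theorem for nondegenerate Hermitian forms; the determinant can then be normalized to $1$ by scaling one of the remaining basis vectors by a unimodular constant.
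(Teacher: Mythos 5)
Your argument is correct. Note that the paper itself gives no proof of this proposition --- it is quoted from Ng's work \cite{Ng13} with only the illustrative special case $A=(1,0,\dots,0)$, $B=0$ written out --- so there is nothing internal to compare against; your route (identifying $L_{[A,B]}$ with the condition that the $r$-plane $[I_{r,r},Z]$ contain the positive line $[A,B]$, checking equivariance of this incidence condition under the right $SU(r,s)$-action, verifying the model case $e_1$, and invoking Witt extension to move any positive line to $[e_1]$ by an element of $SU(r,s)$) is the standard and expected argument, and each step checks out against the paper's conventions.
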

For example, in case of invariantly geodesic subspaces
$$\left\{ \left(
                \begin{array}{c}
                        0  \\
                        W \\
                      \end{array}
                    \right) \in \Omega_{r,s} : W\in \Omega_{r-1,s}\right\},$$
$A = (1,0,\dots,0)\in M(1,r,\mathbb{C})$ and $B = (0,\dots , 0)\in M(1,s,\mathbb{C})$.

For $\Omega_{r,s}$ and $D_{r,s}$, consider the two surjective maps
\begin{eqnarray} \label{fiber images}
&&\phi: \mathbb{P}^{r-1} \times \Omega_{r,s} \rightarrow \Omega_{r,s}, \,\, ([X],Z)\mapsto Z\\
&&\psi: \mathbb{P}^{r-1} \times \Omega_{r,s} \rightarrow D_{r,s}, \,\, ([X],Z)\mapsto [X,XZ].
\end{eqnarray}
For $Z\in \Omega_{r,s}$, denote $Z^\# = \psi (\phi^{-1}(Z)) \subset D_{r,s}$. Similarly for $X\in D_{r,s}$, denote $X^\# = \phi ( \psi^{-1}(X))\subset \Omega_{r,s}$. $Z^\#$ and $X^\#$ are called fibral images of $Z$ and $X$ respectively. Then for $ Z\in \Omega_{r,s}$ and $ X = [A,B] \in D_{r,s}$ where $A\in M(1,r,\mathbb{C})$ and $B\in M(1,s,\mathbb{C})$,
\begin{eqnarray}
Z^\# = \{[A,AZ]\in D_{r,s} : [A]\in \mathbb{P}^{r-1}\} \cong \mathbb{P}^{r-1}\\
X^\# = \{Z\in \Omega_{r,s} : AZ = B\} \cong (r-1,s)\text{-subspace}
\end{eqnarray}

\begin{prop}[cf. \cite{Ng13}] \label{proper moduli}
Let $f : \Omega_{r,r} \rightarrow \Omega_{r+1,r+1}$ be a proper holomorphic map. Suppose that there is a meromorphic map $g : D_{r,r} \rightarrow D_{r+1,r+1}$ such that $f(X^\#)\subset g(X)^\#$ for generic point $X\in D_{r,r}$. Then $g$ is a proper map or
$f(Z)=\left(
  \begin{array}{cc}
    Z & 0 \\
    0 & h(Z) \\
  \end{array}
\right)$ for some holomorphic function $h: \Omega_{r,r}\rightarrow \Delta$.
\end{prop}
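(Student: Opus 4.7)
The plan is to argue by contraposition. Assume $g$ is not proper; I will show that $f$ must take the claimed block-diagonal form, up to equivalence. The underlying idea is that non-properness of $g$ forces $f$ to send certain receding fibre-image subspaces of $\Om_{r,r}$ into an interior-limit $(r,r+1)$-subspace of $\Om_{r+1,r+1}$, and properness of $f$ then pins down its form.

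Non-properness of $g$ produces a sequence $X_n\to X_\infty\in\partial D_{r,r}$ with $g(X_n)\to Y_0\in D_{r+1,r+1}$. Since $\Aut(\Om_{r,r})$ and $\Aut(\Om_{r+1,r+1})$ act on $D_{r,r}$ and $D_{r+1,r+1}$ via the $\#$-correspondence, and the hypothesis $f(X^\#)\subset g(X)^\#$ is preserved under pre-/post-composition by such automorphisms, I may normalise $X_\infty=[(1,0,\dots,0),(0,\dots,0,1)]$ and $Y_0=[(1,0,\dots,0),(0,\dots,0)]$; then $Y_0^\#=\{W\in\Om_{r+1,r+1}:\textrm{first row of }W=0\}$. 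A direct computation shows that the subspaces $X_n^\#=\{Z:A_nZ=B_n\}$ recede to $\partial\Om_{r,r}$: any interior accumulation point $Z_0\in\Om_{r,r}$ of a sequence $Z_n\in X_n^\#$ would give $A_\infty(I-Z_0Z_0^*)A_\infty^*=A_\infty A_\infty^*-B_\infty B_\infty^*=0$, contradicting $I-Z_0Z_0^*>0$ and $A_\infty\ne 0$.

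Properness of $f$ sends $\partial\Om_{r,r}$ into $\partial\Om_{r+1,r+1}$; combined with $f(X_n^\#)\subset g(X_n)^\#\to Y_0^\#$, the boundary extension of $f$ must send the real-analytic piece $\{Z\in\partial\Om_{r,r}:A_\infty Z=B_\infty\}$ into $\ov{Y_0^\#}\cap\partial\Om_{r+1,r+1}$, so the first row of $f(Z)$ vanishes on this boundary set. Using the rational extension of $f$ to the compact duals (standard for proper holomorphic maps between bounded symmetric domains) and the Weierstrass-type factorisation $A_n'f(Z)-B_n'=\sum_i\vp_i^{(n)}(Z)(A_nZ-B_n)_i$ valid on each $X_n^\#$, I pass to the limit $n\to\infty$ to obtain a global algebraic identity relating the first row of $f(Z)$ to the first row of $Z$. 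Sweeping $X_\infty$ through its $\Aut(\Om_{r,r})$-orbit on $\partial D_{r,r}$ yields analogous identities for the other rows and columns of $f$, and these together force $f(Z)=\left(\begin{array}{cc}Z & 0\\ 0 & h(Z)\end{array}\right)$ (up to equivalence) for some holomorphic $h:\Om_{r,r}\to\Delta$.

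The main obstacle is converting the boundary vanishing of $f$ on a real-analytic piece into a global holomorphic constraint; this rests on the rational extension of $f$, a dimension count showing the boundary piece is a generating CR-submanifold (so that boundary data determines $f$), and the transitivity of $\Aut(\Om_{r,r})$ on appropriate boundary strata of $D_{r,r}$ to accumulate enough constraints for the block-diagonal form.
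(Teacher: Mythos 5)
First, note that the paper itself gives no proof of this proposition: it is stated as ``cf.~\cite{Ng13}'' and imported from Ng's work on the holomorphic double fibration, so there is no in-paper argument to compare yours against. Judged on its own terms, your contrapositive set-up is reasonable and the first concrete step is correct: if $X_n\to X_\infty\in\partial D_{r,r}$ with $g(X_n)\to Y_0\in D_{r+1,r+1}$, the computation $A_\infty(I-Z_0Z_0^*)A_\infty^*=A_\infty A_\infty^*-B_\infty B_\infty^*=0$ does show that the fibres $X_n^\#$ eventually leave every compact subset of $\Omega_{r,r}$, and the normalisations of $X_\infty$ and $Y_0$ are legitimate.

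The proof breaks down at the point where you invoke ``the rational extension of $f$ to the compact duals (standard for proper holomorphic maps between bounded symmetric domains).'' This is not a standard fact, and it is not available here. Rationality or even continuous boundary extension of a proper holomorphic map is a deep rigidity statement; it fails for general proper maps between balls of different dimensions without boundary regularity hypotheses, and for the present situation $\Omega_{r,r}\to\Omega_{r+1,r+1}$ the rank of the target exceeds the rank of the source, so none of the Mok--Tsai/Tsai total-geodesy or rationality theorems apply (indeed the whole point of the paper is that such $f$ need not be totally geodesic, and its main theorem has to \emph{assume} preservation of maximal invariantly geodesic subspaces). Without this extension, the assertion that ``the first row of $f(Z)$ vanishes on the boundary piece $\{Z\in\partial\Omega_{r,r}:A_\infty Z=B_\infty\}$'' is not even meaningful, since $f$ has no a priori boundary values there; all you actually get from $f(X_n^\#)\subset g(X_n)^\#\to Y_0^\#$ is information about limit sets along the specific sequences $Z_n\in X_n^\#$. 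The second gap is that the endgame --- passing from these (hypothetical) boundary identities, swept over the $\Aut(\Omega_{r,r})$-orbit of $X_\infty$, to the conclusion $f(Z)=\left(\begin{smallmatrix}Z&0\\0&h(Z)\end{smallmatrix}\right)$ --- is not carried out at all; you name it as ``the main obstacle'' and then assert the conclusion. Since this is precisely where the dichotomy of the proposition is decided (e.g.\ one must rule out that $f(\Omega_{r,r})$ degenerates into some other configuration of invariantly geodesic subspaces), the argument as written does not establish the statement. A workable route should stay inside the domain, exploiting the dual inclusion $g(Z^\#)\subset f(Z)^\#$ and the structure of the double fibration as in \cite{Ng13}, rather than relying on boundary regularity of $f$.
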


\section{Proper monomial rational map from $D_{r,s}$ to $D_{r',s'}$}

Let $g: D_{r,s}\rightarrow D_{r',s'}$ be a proper monomial rational map and $P,\, Q_P$ be homogeneous polynomials
defined by \eqref{P} and \eqref{form of P}.
Then $Q_P$ has the following properties:
\begin{enumerate}
\item $Q_P(x)$ is a homogeneous polynomial  which is not identically zero on
$$\left\{x = \{x_1,\dots,x_{r+s})\in \mathbb{R}^{r+s} : \sum_{j=1}^r x_j=\sum^{r+s}_{j=r+1} x_j\right\}$$
\item  $Q_P(x)>0$ whenever $x_i >0$ for all $i$ and $\sum_{j=1}^{r} x_j> \sum_{j=r+1}^{r+s}x_j$.
\end{enumerate}

\subsection{Classifying proper monomial rational map from $D_{r,r}$ to $D_{r+1,r+1}$}
A situation of classifying proper rational maps between generalized balls is different from that of classifying proper holomorphic maps between unit balls in \cite{D'Angelo88} since there are infinite number of proper rational maps which are same in open dense subset. For example, $g : D_{2,2}\rightarrow D_{3,3}$, $[z_1,\dots, z_4]\mapsto [z_1 h, z_2h , 0, z_3h, z_4h, 0]$ for any holomorphic function $h$ of $\mathbb{C}^4$ which is not identically zero on $D_{2,2}$ are same in open dense subset depending on zero set of $h$. On the other hand, proper rational maps which are same in open dense subset induce the same proper holomorphic map between corresponding bounded symmetric domains of type I.  Hence we consider equivalence relation on proper monomial rational maps to  incorporate these infinite number of rational maps.
\begin{definition}
Let $g_1,\, g_2 : \mathbb{P}^{2r-1} \rightarrow \mathbb{P}^{2r+1}$ be rational maps. We say $g_1$ and $g_2$ are rationally equivalent if there is a rational map $g: \mathbb{P}^{2r-1} \rightarrow \mathbb{P}^{2r+1}$ such that $g$ is a common extension of $g_1$ and $g_2$.
\end{definition}

We may assume that every components of $g : D_{r,s}\rightarrow D_{r',s'}$ have no common factor.

In the rest of this section, we characterize the induced polynomial $P(x)$ and the proper monomial rational maps from $D_{r,r}$ to $D_{r+1,r+1}$ to prove Theorem \ref{classify ball}.
For this aim, we will count the number of monomials of $P$ for suitable $Q_P$.
For a polynomial $A$, denote $n_i(A)$ the number of monomials with maximal degree in  $x_i$ of $A$ and $n(A)$ the number of monomials in $A$.
\begin{lem}
For polynomial $A = (b_1x_1 + \dots + b_kx_k)^m \widetilde{A}$ with nonzero polynomial $\widetilde{A}$, positive integer $m$ and nonzero $b_i$ for all $i,\, 1\leq i\leq k$, $n(A) \geq \sum_{i=1}^k n_i(\widetilde{A})$.
\begin{proof}
$(b_ix_i)^m$ times the monomial with the maximal degree of $x_i$ in $\widetilde{A}$ cannot be canceled.
\end{proof}
\end{lem}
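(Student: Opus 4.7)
The plan is to show, for each index $i$, that the product $A$ contains at least $n_i(\widetilde{A})$ distinct monomials achieving the maximum possible $x_i$-degree in $A$, and then to verify that these $k$ families of monomials are pairwise disjoint. Summing the sizes then yields the desired lower bound on $n(A)$.

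To carry this out, I would first locate the maximum $x_i$-degree of $A$. Since $(b_1 x_1 + \dots + b_k x_k)^m$ contributes at most $m$ to the $x_i$-degree of any term, writing $d_i$ for the maximal $x_i$-degree occurring in $\widetilde{A}$, the maximum $x_i$-degree of $A$ is at most $d_i+m$, and this is attained. Expanding by the multinomial theorem, a term of $A$ with $x_i$-degree equal to $d_i+m$ can only arise by selecting $(b_i x_i)^m$ out of $(b_1 x_1 + \dots + b_k x_k)^m$ and then a monomial $M$ of $\widetilde{A}$ with $\deg_{x_i}(M)=d_i$; any other multi-index or any monomial of $\widetilde{A}$ with lower $x_i$-degree strictly lowers the $x_i$-degree of the resulting term. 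Since $b_i\neq 0$, the coefficient $b_i^m\cdot(\text{coeff of }M)$ is nonzero, and distinct $M$'s produce distinct monomials $b_i^m x_i^m M$ of $A$. Thus no cancellation is possible, giving exactly $n_i(\widetilde{A})$ monomials of $A$ of $x_i$-degree $d_i+m$.

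Finally I would check the disjointness across different indices: a monomial $b_i^m x_i^m M$ constructed for index $i$ has $x_j$-degree equal to $\deg_{x_j}(M)\le d_j$, which is strictly less than $d_j+m$ for $j\ne i$ because $m\ge 1$, so such a monomial cannot also lie in the family associated to $j$. The $k$ families are therefore pairwise disjoint, and $n(A)\ge\sum_{i=1}^k n_i(\widetilde{A})$.

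I do not expect any serious obstacle. The single substantive observation is that a term of maximal $x_i$-degree in $A$ admits exactly one lift to the multinomial expansion, so cannot be cancelled; the disjointness check and the arithmetic afterwards are purely bookkeeping.
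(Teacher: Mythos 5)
Your argument is correct and is essentially the paper's own proof: the paper's one-line justification (``$(b_ix_i)^m$ times the monomial with the maximal degree of $x_i$ in $\widetilde{A}$ cannot be canceled'') is exactly your observation that a term of maximal $x_i$-degree has a unique lift in the multinomial expansion. You merely make explicit the disjointness of the $k$ families, which the paper leaves implicit.
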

\begin{lem} \label{m>1}
Let $P(x)$ be a homogeneous polynomial on $\mathbb{R}^{k}$ of the form $$(b_1x_1+\dots + b_kx_k)^mQ_P(x)$$ for some positive integer $m$, nonzero $b_i$ for all $i,\,1\leq i \leq k$  and homogeneous polynomial  $Q_P(x)$ with nonnegative coefficients. Then if $m \geq 2$, $n(P) \geq 2k-1$.
\end{lem}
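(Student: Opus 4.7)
The plan is to prove this by induction on the number of variables $k$. The base case $k=1$ is immediate since $L^m Q_P = b_1^m x_1^m Q_P$ gives $n(P) = n(Q_P) \geq 1 = 2k-1$. For the inductive step, I would first reduce to the case in which $Q_P$ has no common monomial factor: any such factor $x^\gamma$ can be extracted as $L^m Q_P = x^\gamma (L^m \widetilde Q)$, which preserves the monomial count. Under this reduction, $Q_P|_{x_i=0}$ is nonzero for each $i$.

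Next, view $P$ as a polynomial in $x_k$: write $P = \sum_s p_s(\hat x)\,x_k^s$ with $\hat x = (x_1,\dots,x_{k-1})$, decompose $L = L'(\hat x) + b_k x_k$ with $L' = b_1x_1 + \cdots + b_{k-1}x_{k-1}$, and decompose $Q_P = \sum_l T_l(\hat x)\,x_k^l$. The constant-in-$x_k$ coefficient $p_0 = (L')^m T_0$ has $T_0 \neq 0$ with nonnegative coefficients in $k-1$ variables, so the inductive hypothesis gives $n(p_0) \geq 2(k-1)-1 = 2k-3$. The top coefficient $p_{m+d_k} = b_k^m T_{d_k}$ with $d_k = \deg_{x_k} Q_P$ is nonzero; since $m \geq 2$ forces $m+d_k > 0$, this contributes at least one further monomial at an $x_k$-degree disjoint from those of $p_0$, yielding $n(P) \geq 2k-2$.

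The crux of the proof is locating one additional monomial at an intermediate $x_k$-degree. In the easy case $d_k = 0$ the coefficient $p_s = \binom{m}{s} b_k^s (L')^{m-s} Q_P$ is visibly nonzero for every $s \in [0,m]$, so $p_1$ furnishes the extra monomial at $x_k$-degree $1 \notin \{0,m\}$. When $d_k \geq 1$, I would inspect
\[
p_1 = m b_k (L')^{m-1} T_0 + (L')^m T_1, \qquad p_{m+d_k-1} = b_k^m T_{d_k-1} + m b_k^{m-1} L'\, T_{d_k}.
\]
Vanishing of $T_1$ forces $p_1$ to be the nonzero product $m b_k (L')^{m-1} T_0$, and vanishing of $T_{d_k-1}$ forces $p_{m+d_k-1} = m b_k^{m-1} L'\, T_{d_k} \neq 0$, each providing the needed extra monomial. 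The delicate remaining subcase is when both $T_1$ and $T_{d_k-1}$ are nonzero; here the plan is to argue, by matching coefficients of $p_1$ and $p_{m+d_k-1}$ and using the nonnegativity of the $T_l$ against the mixed-sign structure of $L'$, that $p_1$ and $p_{m+d_k-1}$ cannot simultaneously vanish, so at least one of them is nonzero. Closing this last subcase cleanly — showing that the sign constraints forced by $p_1 = 0 = p_{m+d_k-1}$ together with $T_1, T_{d_k-1} \geq 0$ are inconsistent — is where I expect the main technical work to lie.
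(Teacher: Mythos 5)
Your overall architecture --- induction on $k$, writing $P=\sum_s p_s(\hat x)x_k^s$, harvesting $2k-3$ monomials from $p_0=(L')^mT_0$ via the inductive hypothesis, one more from the nonzero top coefficient $p_{m+d_k}$, and then one from an intermediate $x_k$-degree --- is sensible and genuinely different from the paper, which argues non-inductively by expanding $Q_P$ in powers of a variable $x_1$ occurring in $Q_P$ with $b_1>0$ and counting at least $k-1$ monomials in each of the $x_1^0$ and $x_1^1$ slices plus one at the top. However, the claim you propose to establish in your ``delicate remaining subcase'' is false: $p_1$ and $p_{m+d_k-1}$ can vanish simultaneously even when $T_1$ and $T_{d_k-1}$ are both nonzero. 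Take $k=2$, $m=2$, $L=x_1-x_2$ (so $b_1=1$, $b_k=b_2=-1$, $L'=x_1$) and $Q_P=(x_1+x_2)^2$, which has nonnegative coefficients and no common monomial factor; then $T_0=x_1^2$, $T_1=2x_1$, $T_2=1$, $d_k=2$, and
\[
P=(x_1-x_2)^2(x_1+x_2)^2=(x_1^2-x_2^2)^2=x_1^4-2x_1^2x_2^2+x_2^4 .
\]
Here $p_1=2(-1)x_1\cdot x_1^2+x_1^2\cdot 2x_1=0$ and $p_3=(-1)^2\cdot 2x_1+2(-1)x_1\cdot 1=0$, so both of your designated witnesses vanish; the third monomial $-2x_1^2x_2^2$ lives in the middle slice $p_2$. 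Thus the sign constraints you hoped were inconsistent are in fact realizable, and the proof cannot be closed along the announced lines. What you actually need is that not \emph{all} of the intermediate coefficients $p_1,\dots,p_{m+d_k-1}$ can vanish, and that requires a different argument than inspecting only the two extreme intermediate slices.

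For what it is worth, the paper's own proof is exposed to the very same example: with its choice of expansion variable, the ``second term'' $x_1B^{m-1}(mb_1A_0+A_1B)$ equals $x_1(-x_2)\bigl(2x_2^2+2x_2(-x_2)\bigr)=0$, so the assertion that this term does not vanish is likewise not justified as written; in both arguments the surviving monomial comes from a deeper slice of the expansion. So your instinct that this is where the technical difficulty lies is correct, but the specific mechanism you propose for resolving it does not work.
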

\begin{proof}
Without loss of generality, we may assume that $Q_P(x)$ contain $x_1$ variable with $b_1>0$ and $n(Q_P)\geq2$.
Let $Q_P(x) = A_0 + A_1x_1 + A_2x_1^2 + \cdots + A_\alpha x_1^\alpha$ be the expansion of $Q_P(x)$ with respect to the degree of $x_1$ variable where $\alpha$ is the maximal degree of $x_1$ in $Q_P(x)$, $A_l$ is a homogeneous polynomial without $x_1$ variable having nonnegative coefficients and $A_0$ and $A_\alpha$ are nonzero. Denote $B= b_2 x_2+\dots+ b_k x_k$. Then
\begin{eqnarray}
P(x)=A_0B^m + x_1B^{m-1} ( mb_1A_0 + A_1B) + \dots + x_1^{\alpha +m}A_\alpha.\nonumber
\end{eqnarray}
Note that there are at least $k-1$ monomials in $A_0B^m$ and 1 monomial in $x_1^{\alpha +m}A_\alpha$.
Notice that the second term $x_1B^{m-1} ( mb_1A_0 + A_1B)$ is not vanish and has at least $k-1$ monomials.
Hence summing up, there are at least $2k-1$ monomials in $P$ when $m\geq 2$.
\end{proof}

\begin{lem}\label{nonnegative}
Let $P(x)$ be a homogeneous polynomial on $\mathbb{R}^{2r}$ of the form $$(x_1 +\dots+x_r -x_{r+1} - \dots -x_{2r})\,Q_P(x)$$ for some homogeneous polynomial  $Q_P(x)$ with nonnegative coefficients and $n(Q_P) \\\geq 2$. Then
\begin{enumerate}
\item $n(P) \geq 3r-1$  if $\,r\geq 2$,
\item $n(P) \geq 9\,$    if $\,r=3$.
\end{enumerate}
\end{lem}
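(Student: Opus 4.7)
The plan is to write $P = L \cdot Q_P$, where $L := x_1 + \dots + x_r - x_{r+1} - \dots - x_{2r}$, slice $P$ by powers of one carefully chosen variable, and bound the number of monomials in each slice using the nonnegativity of the coefficients of $Q_P$.

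Since multiplying $Q_P$ by a monomial only translates the support of $P$ and leaves $n(P)$ unchanged, I may assume that $Q_P$ has no common monomial factor. The hypothesis $n(Q_P) \geq 2$ forces $Q_P$ to involve some variable; after possibly swapping the positive and negative halves of $L$ (which replaces $L$ by $-L$ and preserves $n(P)$), I arrange that $Q_P$ involves the positive-side variable $x_1$. Writing $Q_P = A_0 + A_1 x_1 + \dots + A_\alpha x_1^\alpha$ with $\alpha \geq 1$, $A_0, A_\alpha \neq 0$, each $A_l$ a nonnegative polynomial in $x_2, \dots, x_{2r}$, and setting $L' := x_2 + \dots + x_r - x_{r+1} - \dots - x_{2r}$ so that $L = x_1 + L'$, I obtain
\[
P = L'A_0 + \sum_{k=1}^{\alpha}(A_{k-1} + L'A_k)x_1^k + A_\alpha x_1^{\alpha+1}.
\]
The slices have distinct $x_1$-degrees, so $n(P)$ is the sum of their monomial counts.

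Two auxiliary bounds drive the estimate. First, for any nonzero nonnegative polynomial $A$ in $x_2, \dots, x_{2r}$, $n(L'A) \geq 2r - 1$: grading by $a := x_2 + \dots + x_r$, the top $a$-level of $L'A$ receives only positive contributions from the $r - 1$ positive variables of $L'$ and so has at least $r - 1$ monomials, while the bottom $a$-level receives only negative contributions from the $r$ negative variables and has at least $r$ monomials; the two levels are disjoint. Second, for nonzero nonnegative $A, B$ in $x_2, \dots, x_{2r}$, $n(A + L'B) \geq r - 1$: the top $a$-level monomials of $L'B$ have only positive coefficients, and since $A$ has nonnegative coefficients, no cancellation can occur on this level, so these $r - 1$ monomials survive in $A + L'B$. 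Applying these bounds to the slices gives $n(P) \geq (2r-1) + (r-1) + 1 = 3r - 1$ when $\alpha = 1$, and $n(P) \geq (2r-1) + 2(r-1) + 1 = 4r - 2$ when $\alpha \geq 2$, establishing (1). For part (2), the bound $3r - 1 = 8$ in the case $r = 3$ is attained only when $\alpha = 1$, $n(L'A_0) = 5$, $n(A_0 + L'A_1) = 2$, $n(A_1) = 1$; the first and third equalities force $A_0$ and $A_1$ to be single monomials, but then a single-monomial $A_0$ can cancel at most one of the three bottom-$a$ monomials of $L'A_1$, so in fact $n(A_0 + L'A_1) \geq 2r - 2 = 4$, yielding $n(P) \geq 5 + 4 + 1 = 10$; the remaining configurations give strictly larger counts, so $n(P) \geq 9$.

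The main obstacle is the second auxiliary bound: the cancellations between the nonnegative $A_{k-1}$ and the mixed-sign $L'A_k$ in each intermediate slice must be controlled carefully. The key idea that saves the argument is that nonnegativity prevents any cancellation at the top $a$-level of $L'A_k$; the refined tracking of how a single-monomial $A_0$ interacts with the three bottom-level monomials of $L'A_1$ in the case $r = 3$ is what promotes the bound from $3r - 1$ to $3r$.
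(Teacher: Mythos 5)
Your proof is correct and follows essentially the same route as the paper: expand $Q_P$ in powers of $x_1$, split $P$ into its $x_1$-degree slices, and bound each slice's monomial count using the nonnegativity of the $A_l$ (the paper simply asserts the bounds $n(L'A_0)\geq 2r-1$ and $n(A_{k-1}+L'A_k)\geq r-1$, whereas you justify them via the grading by total degree in $x_2,\dots,x_r$). Your $r=3$ refinement, ruling out $n(P)=8$ by showing the three slice minima cannot be attained simultaneously, is a mild variant of the paper's dichotomy on whether $A_0=A_1(x_4+x_5+x_6)$ and reaches the same conclusion.
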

\begin{proof}
As in the proof of Lemma \ref{m>1}, consider
$$P(x) = A_0B + x_1(A_0 + A_1B) + x_1^2 (A_1 + A_2B) + \dots + A_\alpha x_1^{\alpha+1}.$$
Suppose $A_i=0$ but $A_{i+1} \neq 0$ for some $i, 1\leq i\leq \alpha-1$. Then the coefficient of $x_1^{i+1}$ is $A_{i+1}B$ and then there exist at least $2r-1$ monomials which cannot be canceled. This implies that in this case, $n(P) \geq 4r-1$. Hence it is enough to consider when $A_i \neq 0$ for any $i,\, 0\leq i \leq \alpha$. In this case, there are at least $2r-1$ monomials in $A_0B$, $r-1$ monomials in $x_1(A_0 + A_1B)$, $r-1$ monomials in $x_1^2 (A_1 + A_2B)$ and 1 monomial in  $A_\alpha x_1^{\alpha+1}$. Hence $n(P) \geq 3r-1$.

 Consider $r=3$. We may assume that $A_i \neq 0$ for all $i$.  Since $n(A_i + A_{i+1})\geq 2$ for all $i$, it is enough to consider when $\alpha =1$. Then $P(x) = A_0B + x_1(A_0 +A_1B) + A_1x_1^2$. If $A_0 = A_1(x_4+x_5+x_6)$, then $n(A_0B) \geq 9$ and if $A_0 \neq A_1(x_4+x_5+x_6)$, then $n(x_1(A_0 +A_1B))\geq 3$. Hence $n(P) \geq 9$.
\end{proof}

\begin{lem}\label{degree r poly}
Let $P(x)$ be a nonzero homogeneous polynomial on $\mathbb{R}^{k}\,(k\geq 1)$ of the form $$(b_1x_1+\dots + b_kx_k)^m(a_1x_1 + \dots + a_kx_k)$$  for some positive integer $m$, $a_i \in \mathbb{R}$ for $i,\, 1\leq i \leq k$ and nonzero $b_i$ for all $i,\, 1\leq i \leq k$ . Then
\begin{enumerate}
\item if $m\geq 2$, then $n(P) \geq 2k-1$
\item If $m=1$ and $n(a_1x_1 + \dots + a_kx_k)\geq 2$, then $n(P) \geq 2k-2$.
\end{enumerate}
\end{lem}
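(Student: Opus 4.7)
Write $B=b_1x_1+\dots+b_kx_k$, $L=a_1x_1+\dots+a_kx_k$, $B'=B-b_1x_1$, and $L'=L-a_1x_1$. The plan is to expand $P=B^mL$ as a polynomial in $x_1$, so that monomials in different coefficients of $x_1^j$ never interfere:
\[
P=\sum_{j=0}^{m+1}C_j(x_2,\dots,x_k)\,x_1^j,\qquad n(P)=\sum_{j=0}^{m+1}n(C_j).
\]
A short binomial computation yields $C_0=(B')^mL'$, $C_{m+1}=a_1b_1^m$, and for $1\leq j\leq m$,
\[
C_j=b_1^{j-1}(B')^{m-j}\Bigl[\tbinom{m}{j-1}a_1B'+\tbinom{m}{j}b_1L'\Bigr].
\]

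For Part (1), with $m\geq 2$, I would induct on $k$. The base $k=1$ is immediate since $P=a_1b_1^mx_1^{m+1}$ forces $n(P)=1=2k-1$. In the step, after relabeling indices I may assume $a_1\neq 0$. If $L'=0$ then $L=a_1x_1$ and $n(P)=n(B^m)=\binom{m+k-1}{k-1}$, which is at least $2k-1$ for $m\geq 2$ by a direct check reducing to $(k-1)(k-2)\geq 0$. If $L'\neq 0$, the inductive hypothesis applied to the $(k-1)$-variable polynomial $C_0=(B')^mL'$ gives $n(C_0)\geq 2k-3$, and $n(C_{m+1})=1$. To squeeze out one more monomial I would show that $C_1$ and $C_m$ cannot both vanish: vanishing of $C_1$ forces $L'=-\frac{a_1}{mb_1}B'$ while vanishing of $C_m$ forces $L'=-\frac{ma_1}{b_1}B'$, and equating the two gives $a_1(m^2-1)=0$, impossible for $a_1\neq 0$ and $m\geq 2$. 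Summing, $n(P)\geq(2k-3)+1+1=2k-1$.

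For Part (2), with $m=1$, I would argue by direct combinatorial counting on the quadratic form $P=BL$. Let $S=\{i:a_i\neq 0\}$ and $t=|S|\geq 2$. The diagonal monomials $x_i^2$ have coefficient $a_ib_i$ and contribute $t$ nonzero terms. The mixed monomials $x_ix_j$ with exactly one index in $S$ have coefficient $a_ib_j$ or $a_jb_i$, both nonzero, giving $t(k-t)$ more. For pairs $\{i,j\}\subset S$ the coefficient $a_ib_j+a_jb_i$ may vanish, but the ``zero-pair'' graph on $S$ is triangle-free: if $a_ib_j+a_jb_i=a_jb_l+a_lb_j=a_ib_l+a_lb_i=0$ then $a_i/b_i=-a_j/b_j=a_l/b_l=-a_i/b_i$, forcing $a_i=0$. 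Mantel's theorem then bounds the number of vanishing within-$S$ pairs by $\lfloor t^2/4\rfloor$, so
\[
n(P)\geq t+t(k-t)+\binom{t}{2}-\lfloor t^2/4\rfloor.
\]
The right-hand side is (up to the floor) concave in $t$ and equals $2k-2$ at $t=2$; a brief parity check shows the same holds at $t=k$, and concavity propagates the bound $\geq 2k-2$ to all $2\leq t\leq k$.

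The hard step in Part (1) is the arithmetic ruling out the simultaneous vanishing of $C_1$ and $C_m$, which relies on the identity $m^2\neq 1$ for $m\geq 2$ and is what makes the ``$+1$ extra monomial'' in the count available. The hard step in Part (2) is bounding the off-diagonal cancellations within $S$, which is handled by the triangle-free structure of the zero-pair graph together with Mantel's bound. Both difficulties amount to preventing accidental cancellation among the ``middle'' coefficients of the expanded polynomial.
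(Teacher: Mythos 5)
Your proof is correct, and while Part (1) shares the paper's skeleton (expanding $P$ in powers of $x_1$ so that the coefficients $C_j$ cannot interfere), the way you distribute the count is genuinely different and, in my view, more robust. The paper bounds $n(C_0)\geq k-1$ and $n(C_1)\geq k-1$ directly and adds the top coefficient, then falls back on a separate binomial count when $C_1=0$; your induction instead extracts $2k-3$ monomials from $C_0$ alone and only needs one more ``middle'' coefficient to survive, which your $C_1$-versus-$C_m$ dichotomy (using $m^2\neq 1$) guarantees. This cleanly absorbs the degenerate case $mb_1L'+a_1B'=0$, where the paper's fallback count of $n((B')^{m+1})$ is actually miscomputed (it is the number of degree-$(m+1)$ monomials in $k-1$ variables, which equals $1$ when $k=2$, not $\binom{k+m}{m}$), so your route quietly repairs a gap. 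For Part (2) the paper offers no argument beyond ``the proof is similar,'' whereas you give a self-contained and entirely different proof: a direct count of the quadratic form $BL$ in which the only possible cancellations are the off-diagonal pairs inside $S=\{i:a_i\neq 0\}$, controlled by the observation that the zero-pair graph (edges where $a_i/b_i=-a_j/b_j$) is triangle-free, hence Mantel's theorem caps the losses at $\lfloor t^2/4\rfloor$; the endpoint check at $t=2$ and $t=k$ plus concavity (with the integrality/parity correction needed at $k=3$, which you flag) closes the estimate. The trade-off is that your Part (2) imports a small piece of extremal graph theory that the paper's linear-expansion style would avoid, but in exchange you get an argument that is actually written down, sharp at $t=2$ and $t=k$, and transparent about exactly where cancellation can occur.
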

\begin{proof}
We will prove (1). The proof of (2) is similar.

If $n(a_1x_1 + \dots + a_kx_k)=1$, then there are
$\left(
  \begin{array}{c}
     k+m-1\\
     m\\
  \end{array}
\right) \geq 2k-1$ number of monomials in $P$.

Suppose that  $n(a_1x_1 + \dots + a_kx_k)\geq 2$. We may assume that $a_1 \neq 0$. Put $A = a_2x_2 + \dots + a_kx_k$ and $B = b_2x_2+\dots + b_kx_k$. Then
$$
P(x) = B^mA+ x_1B^{m-1}(mb_1A + a_1B ) + \dots + a_1 x_1^{m+1}.
$$

Consider the case $mb_1A + a_1B \neq 0$. Let $x$ be the number of $a_i$'s which are zero and $y$ be the number of $a_i$'s which are nonzero.
Then $n(B^mA) \geq y-1+x(y-1) = -y^2 + (k+2)y-1-k$ for $y,\, 2\leq y \leq k$. At $y=2$ the minimum $k-1$ appears. Hence $n(P)\geq n(B^mA)+n(B^{m-1}(mb_1A + a_1B ))+ n( a_1 x_1^{m+1}) \geq 2k-1$.

If $mb_1A + a_1B = 0$, $n(B^mA) = n(B^{m+1}) =\left(
  \begin{array}{c}
     k+m\\
     m\\
  \end{array}
\right) \geq 2k-1$.
\end{proof}

\begin{lem}\label{degree1poly}
Let $P(x) = (x_1+x_2-x_3-x_4)\, Q_P(x)$ for $Q_P(x)=a_1x_1 + a_2x_2+a_3x_3+a_4x_4, \,\,a_i\in \mathbb{R},\,\, i=1,2,3,4$. Suppose that $n(P)\leq 6$ and
\begin{equation}\label{condition of Q}
Q_P(x)>0 \text{ whenever  } x_1+x_2>x_3+x_4 \text{ and } x_i>0 \text{ for all }i, 1\leq i\leq 4,
\end{equation}
then the $Q_P(x)$ is one of the following up to multiplication of constants:
$$
x_1,\,\, x_2,\,\, x_3,\,\, x_4,\,\, x_1+x_3,\,\,x_1+x_4,\,\, x_2+x_3,\,\, x_2+x_4,\,\, x_1+x_2+x_3+x_4$$
\end{lem}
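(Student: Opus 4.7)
The plan is to reduce the problem to a combinatorial case analysis by first expanding
\[
P(x) = (x_1+x_2-x_3-x_4)(a_1x_1+a_2x_2+a_3x_3+a_4x_4)
\]
and reading off the ten possible coefficients (one for each monomial $x_ix_j$, $1\le i\le j\le 4$). A direct computation gives
\[
\begin{array}{rlrl}
[x_1^2]P&=a_1,\quad&[x_2^2]P&=a_2,\\
[x_3^2]P&=-a_3,\quad&[x_4^2]P&=-a_4,\\
[x_1x_2]P&=a_1+a_2,\quad&[x_3x_4]P&=-(a_3+a_4),\\
[x_1x_3]P&=a_3-a_1,\quad&[x_1x_4]P&=a_4-a_1,\\
[x_2x_3]P&=a_3-a_2,\quad&[x_2x_4]P&=a_4-a_2.
\end{array}
\]
So $n(P)\le 6$ is equivalent to saying that at least four of these ten linear expressions in $a_1,a_2,a_3,a_4$ vanish.

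Next I would extract the sign information from the positivity hypothesis \eqref{condition of Q}. By continuity $Q_P\ge 0$ on the closure $\{x_i\ge 0,\ x_1+x_2\ge x_3+x_4\}$, and evaluating at $e_1$, $e_2$ and at the boundary points $e_1+e_3$, $e_1+e_4$, $e_2+e_3$, $e_2+e_4$ yields
\[
a_1\ge 0,\quad a_2\ge 0,\quad a_1+a_3\ge 0,\quad a_1+a_4\ge 0,\quad a_2+a_3\ge 0,\quad a_2+a_4\ge 0.
\]
These six inequalities will be used repeatedly to collapse or exclude subcases; e.g.\ $a_1+a_2=0$ forces $a_1=a_2=0$, and $a_3+a_4=0$ combined with $a_1=0$ forces $a_3=a_4=0$.

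The main step is the case analysis, split according to which of $a_1,a_2$ are zero:
\begin{itemize}
\item[(A)] $a_1=a_2=0$: positivity gives $a_3,a_4\ge 0$. Counting vanishing conditions forces $a_3=0$ or $a_4=0$, leading to $Q_P\in\{x_3,x_4\}$ up to scale.
\item[(B)] $a_1=0<a_2$ (and symmetrically (C) with roles of $1,2$ swapped): then $a_3,a_4\ge 0$, and exactly one of $a_3,a_4$ vanishes or each equals $a_2$; this gives $Q_P\in\{x_2,x_2+x_3,x_2+x_4\}$.
\item[(D)] $a_1,a_2>0$: here one checks that $a_3+a_4=0$ together with the available sign constraints admits at most three vanishing conditions (insufficient), and so all four of $a_1=a_3,\ a_1=a_4,\ a_2=a_3,\ a_2=a_4$ must hold, forcing $a_1=a_2=a_3=a_4$, i.e.\ $Q_P\propto x_1+x_2+x_3+x_4$.
\end{itemize}
Collecting the nine profiles from (A), (B), (C), (D) gives precisely the list in the statement.

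The routine part is the bookkeeping of which of the ten equations can simultaneously hold; the only delicate point --- and the one I expect to be the real obstacle --- is handling case (D) cleanly, where one must rule out configurations involving $a_3+a_4=0$ (so that $a_3$ and $a_4$ have opposite signs) by showing that the sign constraints $a_1+a_j,a_2+a_j\ge 0$ prevent more than three of the ten expressions from vanishing simultaneously. Once this exclusion is carried out, the remaining subcase collapses immediately to $a_1=a_2=a_3=a_4$, and the classification is complete.
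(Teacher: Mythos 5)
Your proposal is correct and follows essentially the same route as the paper: expand $P$ into its ten monomial coefficients, translate $n(P)\le 6$ into ``at least four coefficients vanish,'' extract sign constraints on the $a_i$ from the positivity hypothesis, and run a case analysis on which $a_i$ vanish (the paper organizes the cases by the number of vanishing $a_i$'s rather than by the status of $a_1,a_2$, but the content is identical, including the key final step that three of the four equalities $a_i=a_j$, $i\in\{1,2\}$, $j\in\{3,4\}$, already force $a_1=a_2=a_3=a_4$). The only loose ends are bookkeeping ones you flag yourself, e.g.\ in case (D) one must also exclude $a_3=0$ or $a_4=0$, and the phrasing in case (B) should read ``either both of $a_3,a_4$ vanish, or exactly one vanishes and the other equals $a_2$.''
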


\begin{proof}
We prove the lemma case by case.
\begin{eqnarray}
P(x) =&& a_1x_1^2 + a_2x_2^2 - a_3x_3^2 - a_4x_4^2 + (a_2 + a_1)x_1x_2 + (a_3 - a_1)x_1x_3 \nonumber\\
&& + (a_3-a_2)x_2x_3 + (a_4-a_1)x_1x_4 + (a_4-a_2)x_2x_4 - (a_3+a_4)x_3x_4
\end{eqnarray}
\begin{enumerate}
\item If only one $a_i$ is zero and the others are nonzero, then $Q_P$ is $x_i$ for $1\leq i \leq 4$.
\item If $a_1=0$ and $a_i\neq 0$ where $2\leq i \leq 4$, then there are monomials, $x_1x_i,\,\, x_i^2 $ for $2\leq i \leq 4$ which cannot be canceled. Hence $a_2=a_3,\,\, a_2 = a_4, \,\, a_4 + a_3=0$ and this is a contradiction. If $a_j = 0$ and $a_k \neq 0$ for $k\neq j$, by the same way, this cannot happen.
\item If $a_1=a_2=0,\,\,a_3\neq 0 ,\,\, a_4\neq 0$, then $a_3+a_4=0$. This contradicts to the condition \eqref{condition of Q}. Similarly, there is no $Q_P$ for $a_3=a_4=0,\,\,a_1\neq 0 ,\,\, a_2\neq 0$.
\item If $a_2=a_4=0,\,\,a_1\neq 0 ,\,\, a_3\neq 0$, then $a_1=a_3$ and $a_1>0$.
This case corresponds to $Q_P(x) = x_1+ x_3$ and similarly, cases, $\{a_1=a_3=0,\,\,a_2\neq 0 ,\,\, a_4\neq 0\}$, $\{a_1=a_4=0,\,\,a_3\neq 0 ,\,\, a_2\neq 0\}$, $\{a_3=a_2=0,\,\,a_1\neq 0 ,\,\, a_4\neq 0\}$ corresponds to $x_2+x_4,\,\,x_3+ x_2, \,\, x_1+x_4$ respectively.
\item If all $a_i$ are nonzero, by \eqref{condition of Q} $a_1>0$, $a_2 >0$. Hence at least 3 monomial among $(a_3 - a_1)x_1x_3,\, (a_3-a_2)x_2x_3,\, (a_4-a_1)x_1x_4,\, (a_4-a_2)x_2x_4 $ should be zero.
    This implies that $a_1=a_2=a_3=a_4$.
\end{enumerate}
\end{proof}

\begin{proof}[Proof of Theorem~\ref{classify ball}]
 Let $$(x_1 +\dots+x_r -x_{r+1} - \dots -x_{2r})^m\,Q_P(x)$$ be the homogeneous polynomial induced by $g$ for some positive integer $m$ and homogeneous polynomial  $Q_P(x)$. Then $P$ satisfies $n(P) \leq 2r +2$. If $n(Q_P)=1$, $g$ is rationally equivalent to \eqref{standard}. Hence we only need to consider when $n(Q_P)\geq 2$.

Suppose $m\geq 2$. Then by Lemma  \ref{m>1} and \ref{degree r poly},
$n(P)\geq 4r-1 > 2r+2$. Hence $m=1$. On the other hand, by Lemma \ref{degree r poly} and \ref{nonnegative}, $n(P)\geq 2r+2$ for all $r\geq 3$.

For $m=1,\, r=2$, by Lemma \ref{degree1poly},
\begin{eqnarray*}
&&x_1+x_2 -x_3 -x_4 ,\:
x_1^2 + x_1x_2 + x_2x_3 - x_3^2 -x_1x_4 - x_3x_4, \\
 &&x_2^2 + x_1x_2 + x_1x_4 - x_4^2 -x_2x_3 - x_3x_4,\:
x_1^2 + x_1x_2 + x_2x_4 - x_4^2 -x_1x_3 - x_3x_4, \\
&&x_2^2 + x_1x_2 + x_1x_3 - x_3^2 -x_2x_4 - x_3x_4,\:
x_1^2 + 2x_1x_2 + x_2^2 - x_3^2 -2x_3x_4 - x_4^2.
\end{eqnarray*}
Then the first one induces \eqref{standard} and the last one induce the map \eqref{ng}. The second to fifth one induce the map equivalent to \eqref{me}.
\end{proof}

\section{Proper holomorphic maps between bounded symmetric domains}

\subsection{Construction of proper holomorphic maps from $\Omega_{r,s}$ to $\Omega_{r',s'}$}
In this section, using the relations between $(r-1,s)$-subspaces in $\Omega_{r,s}$ and projective subspaces ($\cong \mathbb{P}^{r-1}$) in $D_{r,s}$ which is given in \cite{Ng13}, we describe the construction of proper holomorphic mapping between bounded symmetric spaces of type I.
To consider the boundary behavior of $g$, extend $\phi$ and $\psi$ to
\begin{eqnarray}
&&\tilde{\phi}: \mathbb{P}^{r-1} \times \overline\Omega_{r,s} \rightarrow \overline\Omega_{r,s}, \,\, ([X],Z)\mapsto Z\nonumber\\
&&\tilde{\psi}: \mathbb{P}^{r-1} \times \overline\Omega_{r,s} \rightarrow \overline{D}_{r,s}, \,\, ([X],Z)\mapsto [X,XZ].\nonumber
\end{eqnarray}
For the boundary points, consider the fibral image with respect to this extended map.
Let $z\in \partial\Omega_{r,s}$. This implies that $z$ satisfies $I_{r,r} - z\overline{z}^t \geq 0$ and there is $a\in \mathbb{C}^r$ such that $a(I_{r,r} - z\overline{z}^t )\overline{a}^t =0 $. Hence $z^\#$ may not be contained in $\partial D_{r,s}$ and
\begin{eqnarray}
z^\# \cap \partial D_{r,s} = \left\{ [a,az]\in \overline{D}_{r,s} : [a]\in \mathbb{P}^{r-1}, a(I_{r,r} - z\overline{z}^t )\overline{a}^t =0\right\}
\end{eqnarray}
On the other hand, for $[a,b] \in \partial D_{r,s}$ where $ a\in M(1,r,\mathbb{C})$ and  $ b\in M(1,s,\mathbb{C})$,
if $z\in [a,b]^\#$, $a\overline{a}^t = b\overline{b}^t = az\overline{(az)}^t = az \overline{z}^t\overline{a}^t$. Hence for $[a,b]\in \partial D_{r,s}$, $[a,b]^\# \subset \partial \Omega_{r,s}$.

\begin{definition}
For a rational map $g : D_{r,s}\rightarrow D_{r',s'}$, we say rational map $g$ is \textit{proper}
if for any point $x\in \partial D_{r,s}$ and open neighborhood $U$ of $x$
which does not intersect the indeterminacy of $g$, $g$ is proper on  $U\cap D_{r,s}$.
\end{definition}

\begin{prop} \label{proper}
Let $f : \Omega_{r,s} \rightarrow \Omega_{r',s'}$ be a holomorphic map. Suppose that there is a proper rational map $g : D_{r,s} \rightarrow D_{r',s'}$ satisfying
\begin{eqnarray} \label{fiber preserving}
f(X^\#)\subset g(X)^\# \,\,\text{ for generic point } X\in D_{r,s}.
\end{eqnarray}
Then $f$ is proper.
\end{prop}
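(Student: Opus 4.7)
My strategy is to verify properness of $f$ directly: given any sequence $\{z_n\} \subset \Omega_{r,s}$ with $z_n \to z \in \partial\Omega_{r,s}$, I will show every cluster point of $\{f(z_n)\}$ lies in $\partial\Omega_{r',s'}$. The idea is to enclose each $z_n$ inside a fibre $X_n^{\#}$, where $X_n \in D_{r,s}$ itself tends to $\partial D_{r,s}$, and then push the boundary behaviour through the proper rational map $g$.

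Because $z \in \partial\Omega_{r,s}$, the matrix $I_{r,r} - zz^*$ is singular and positive semidefinite, so there exists a nonzero $a \in \mathbb{C}^r$ with $a(I_{r,r}-zz^*)a^* = 0$. By the boundary computation recorded immediately before the proposition this says $[a,az] \in \partial D_{r,s}$. Set $X_n := [a,az_n]$; then $X_n \in D_{r,s}$, $X_n \to [a,az] \in \partial D_{r,s}$, and $z_n \in X_n^{\#}$ tautologically. I will choose $a$ generically in the null cone of $I_{r,r}-zz^*$ (and, if necessary, pass to a subsequence of $\{z_n\}$) so that the curve $\{X_n\}$ avoids the indeterminacy locus of $g$, which is a proper analytic subvariety of $D_{r,s}$.

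The second step is to promote the assumption $f(X^{\#}) \subset g(X)^{\#}$ from ``generic $X$'' to ``every $X$ outside the indeterminacy of $g$''. Both $X^{\#}$ and $g(X)^{\#}$ are cut out by linear equations whose coefficients vary continuously in $X$, and $f$ is continuous, so the set of $X \in D_{r,s}$ satisfying the inclusion is closed in the complement of the indeterminacy of $g$; being dense there by hypothesis, it fills that complement. Hence $f(z_n) \in f(X_n^{\#}) \subset g(X_n)^{\#}$ for all large $n$.

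Finally, properness of $g$ (in the sense of the preceding definition) combined with $X_n \to \partial D_{r,s}$ forces $g(X_n) \to \partial D_{r',s'}$. Passing to a subsequence and normalizing homogeneous coordinates, write $g(X_n) = [A_n',B_n'] \to [A',B'] \in \partial D_{r',s'}$, and extract a further subsequence with $f(z_n) \to w \in \overline{\Omega_{r',s'}}$. The relation $f(z_n) \in g(X_n)^{\#}$ reads $A_n' f(z_n) = B_n'$; in the limit $A'w = B'$, so $w \in [A',B']^{\#}$, and the observation in the paragraph preceding the proposition that $[A',B']^{\#} \subset \partial\Omega_{r',s'}$ whenever $[A',B'] \in \partial D_{r',s'}$ yields $w \in \partial\Omega_{r',s'}$. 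The main obstacle I anticipate is the genericity step for $a$: in the extreme case that the null space of $I_{r,r}-zz^*$ is one-dimensional there is essentially no freedom in $a$, and one must instead exploit that the indeterminacy locus of $g$ has codimension at least two in $\mathbb{P}^{r+s-1}$, together with the freedom to thin out $\{z_n\}$, to keep $X_n$ outside the indeterminacy.
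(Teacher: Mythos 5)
Your proof follows essentially the same route as the paper's: enclose each $z_n$ in a fibre $X_n^\#$ with $X_n$ tending to a boundary point of $D_{r,s}$, then push through $g$ via $f(z_n)\in g(X_n)^\#$ together with the fact that $[A',B']^\#\subset\partial\Omega_{r',s'}$ whenever $[A',B']\in\partial D_{r',s'}$. You merely supply details that the paper's three-line proof leaves implicit (the explicit choice $X_n=[a,az_n]$, the promotion from generic $X$ to all $X$ off the indeterminacy locus, and the handling of indeterminacy at the limit point), so the argument is fine.
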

\begin{proof}
Let $\{Z_j\}$ be a sequence in $\Omega_{r,s}$ such that $Z_j \rightarrow z\in \partial \Omega_{r,s}$. Choose points $X_j \in Z_j^\#$ and $x\in \partial D_{r,s}\cap z^\#$ such that $X_j \rightarrow x$.
Then since $g(X_j)\rightarrow g(x)$, $f(Z_j) \in f(X_j^\#) \subset g(X_j)^\# \rightarrow g(x)^\# \subset \partial \Omega_{r',s'}$. Hence $f$ is proper.
\end{proof}

Let $f : \Omega_{r,s} \rightarrow \Omega_{r',s'}$ be a proper holomorphic  maps  which is provided from a proper rational map $g : D_{r,s} \rightarrow D_{r',s'}$ satisfying the condition in Proposition \ref{proper}.
Denote $g = [g_1, g_2]$ where $g_1$ has $r'$-components and $g_2$ has $s'$-components.
For $X=[A,B]\in D_{r,s}$ and $Z\in X^\#$ i.e. $B=AZ$.
Then
$f([A,AZ]^\#) \subset g([A,AZ])^\#$ and this implies that
\begin{eqnarray} \label{system of equations}
g_1([A,AZ]) f(Z) = g_2([A,AZ]) \text{ for all } A\in \mathbb{P}^{r-1}.
\end{eqnarray}
\begin{prop} \label{indep vectors}
Let $g = [g_1,g_2] : D_{r,s} \rightarrow D_{r',s'}$ be a proper rational map. Let $f : M(r,s,\CC)\rightarrow M(r',s',\CC)$ be a holomorphic map satisfying \eqref{system of equations}.
Suppose that for generic points $Z\in \Omega_{r,s}$, there are $r'$ points $\{X_i: 1\leq i\leq r'\}$ in $\mathbb{P}^{r-1}$ such that
$\{g_1([X_i,X_i Z]): 1\leq i\leq r'\}$ are independent as $r'$ vectors in $\CC^{r'}$.
Then $f(\Omega_{r,s}) \subset \Omega_{r',s'}$.
\end{prop}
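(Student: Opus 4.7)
My plan is to first use the linear independence hypothesis to solve explicitly for $f(Z)$ at generic points of $\Omega_{r,s}$, and then to translate the condition $f(Z)\in\Omega_{r',s'}$ into a Hermitian positive-definiteness statement that follows from the properness of $g$ together with the fact that the identity $g_1([A,AZ])f(Z)=g_2([A,AZ])$ holds for \emph{every} $A\in\mathbb{P}^{r-1}$.

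Fix a generic $Z\in\Omega_{r,s}$ together with the points $X_1,\ldots,X_{r'}\in\mathbb{P}^{r-1}$ produced by the hypothesis. Form the $r'\times r'$ matrix $G_1$ whose $i$-th row is $g_1([X_i,X_iZ])$ and the $r'\times s'$ matrix $G_2$ whose $i$-th row is $g_2([X_i,X_iZ])$. Evaluating the identity at $A=X_i$ and stacking over $i$ gives $G_1\,f(Z)=G_2$, and invertibility of $G_1$ yields $f(Z)=G_1^{-1}G_2$. The desired condition $I_{r'}-f(Z)f(Z)^*>0$ is then equivalent, by conjugating with the invertible matrix $G_1$, to the matrix inequality
\[
G_1G_1^* - G_2G_2^* > 0.
\]

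To establish this, I would use that the defining identity holds for every $A\in\mathbb{P}^{r-1}$, not just for the chosen $X_i$: since $Z\in\Omega_{r,s}$ gives $[A,AZ]\in D_{r,s}$, the properness of $g$ yields $g([A,AZ])\in D_{r',s'}$, hence
\[
g_1([A,AZ])\bigl(I_{r'}-f(Z)f(Z)^*\bigr)g_1([A,AZ])^* = \|g_1([A,AZ])\|^2 - \|g_2([A,AZ])\|^2 > 0.
\]
Writing an arbitrary $v\in\CC^{r'}$ in the basis $\{g_1([X_i,X_iZ])\}$ as $v = c\,G_1$, the scalar positivity along the holomorphic family combined with the spanning property of the $X_i$ is meant to force $c\,(G_1G_1^*-G_2G_2^*)\,c^*>0$ for every nonzero $c$, which is exactly the matrix inequality above. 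Once $f(Z)\in\Omega_{r',s'}$ is known on the dense open generic locus, continuity of the holomorphic map $f$ extends the inclusion to all of $\Omega_{r,s}$.

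The main obstacle I foresee is bridging the gap between the pointwise scalar positivity at each individual $A$ and the full matrix positive-definiteness of $G_1G_1^*-G_2G_2^*$. The diagonal entries are directly positive from properness applied at each $X_i$, but controlling the off-diagonal entries relies on the identity along the entire parameter space $\mathbb{P}^{r-1}$. The role of the linear independence hypothesis is precisely to convert the positivity of the Hermitian form $v\mapsto v(I-ff^*)v^*$ on the complex algebraic family $\{g_1([A,AZ]):A\in\mathbb{P}^{r-1}\}\subset\CC^{r'}$ into its positive-definiteness on the full $\CC^{r'}$, using that this family spans $\CC^{r'}$; making this transition rigorous, especially when the image family is a proper algebraic subvariety rather than an open subset, is the delicate point of the argument.
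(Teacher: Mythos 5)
Your proposal follows essentially the same route as the paper's own proof: evaluate \eqref{system of equations} at the $r'$ independent points to get $G_1\,f(Z)=G_2$, invert to find $f(Z)=G_1^{-1}G_2$, and reduce $I_{r',r'}-f(Z)f(Z)^*>0$ to $G_1G_1^*-G_2G_2^*>0$, which you then try to extract from the scalar positivity $\|g_1([A,AZ])\|^2-\|g_2([A,AZ])\|^2>0$ along the family parametrized by $A\in\mathbb{P}^{r-1}$. The paper's proof is exactly this argument compressed into two lines: it records $g_1([X_i,X_iZ])\bigl(I_{r',r'}-f(Z)f(Z)^*\bigr)g_1([X_i,X_iZ])^*>0$ at the $r'$ chosen points and then asserts that independence of the $g_1([X_i,X_iZ])$ forces $I_{r',r'}-f(Z)f(Z)^*$ to be positive definite.

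The step you single out as delicate is a genuine gap, and you are right not to paper over it --- note that the published proof asserts precisely this step with no justification. Positivity of a Hermitian form on a basis does not imply positive definiteness: $H=\mathrm{diag}(1,-1)$ satisfies $vHv^*=3>0$ for both $v=(2,1)$ and $v=(2,-1)$. Nor does positivity on a spanning algebraic family of the sort occurring here suffice: for $H=\mathrm{diag}(1,1,-1)$ and $v(a)=(a_1^2,a_2^2,a_1a_2)$ one has $v(a)Hv(a)^*=|a_1|^4+|a_2|^4-|a_1|^2|a_2|^2>0$ for every $[a]\in\mathbb{P}^1$, the image spans $\CC^3$, yet $H$ has a negative eigenvalue. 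Moreover, for a general $c\in\CC^{r'}$ the vector $cG_1=\sum_i c_i\,g_1([X_i,X_iZ])$ is not of the form $g_1([A,AZ])$ for any single $A$ (the components of $g$ are not linear in $A$), so the scalar inequality along the family cannot simply be evaluated in an arbitrary direction $c$. Closing the gap requires an extra input beyond spanning plus pointwise positivity --- for instance the explicit form of the maps $g$ classified in Theorem \ref{classify ball}, or a continuity/degeneration argument showing $I_{r',r'}-f(Z)f(Z)^*$ cannot acquire a zero eigenvalue as $Z$ moves within $\Omega_{r,s}$ from a base point where positive definiteness is checked directly. As written, both your proposal and the paper stop short of supplying that input.
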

\begin{proof}
By \eqref{system of equations}, for $Z\in \Omega_{r,s}$
$$g_1([X_i,X_i Z])\left( I_{r',r'} - f(Z)f(Z)^* \right) g_1([X_i,X_i Z])^* >0.$$
Hence if $\{g_1([X_i,X_i Z]): 1\leq i\leq r'\}$ are independent, $I_{r',r'} - f(Z)f(Z)^*$ is positive definite. This implies the proposition.
\end{proof}

Hence for a proper rational map $g$ satisfying the condition in Proposition \ref{indep vectors}, if we find a solution of the system of equations \eqref{system of equations}, we get a proper holomorphic maps by Proposition  \ref{proper}.

\begin{remark}
For a meromorphic map $g : D_{r,s} \rightarrow D_{r',s'}$ and a holomorphic map $f : \Omega_{r,s}\rightarrow \Omega_{r',s'}$  satisfying \eqref{fiber preserving}, put $g'$ a meromorphic map $ h\circ g_2 \circ h'$ for some $h'\in \text{Aut}(D_{r,s})$ and $h \in\text{Aut}(D_{r',s'})$. Then there is $H\in \text{Aut}(\Omega_{r,s})$ and $H'\in \text{Aut}(\Omega_{r',s'})$ such that $g'$ and $f' := H'\circ f\circ H$ satisfies \eqref{fiber preserving}. This is due to the construction of \eqref{fiber images} and for more detail, see \cite{Ng13}.
\end{remark}

\subsection{Proof of Theorem \ref{main theorem}}
Note that two rationally equivalent proper monomial rational maps from $D_{r,r}$ to $D_{r+1,r+1}$ induce the same proper holomorphic map from $\Omega_{r,r}$ to $\Omega_{r+1,r+1}$.
By Theorem \ref{classify ball}, there are three possibilities to be $g$.
\eqref{ng} and \eqref{me} satisfies the condition in Proposition \ref{indep vectors}.
We will only induce the proper map \eqref{me} since calculation of map \eqref{ng} is similar.
Proper rational map is given by $g([x_1,\,x_2,\,x_3,\,x_4]) = [x_1^2,\, x_1x_2,\, x_2x_3,\, x_3^2 ,\,x_1x_4,\,  x_3x_4].$
Let $Z=\left(
         \begin{array}{cc}
           z_1 & z_2 \\
           z_3 & z_4 \\
         \end{array}
       \right) \in \Omega_{2,2}$. Then
\begin{eqnarray}
&&Z^\# = \left\{[x_1,\,x_2,\, x_1z_1 + x_2z_3,\, x_1z_2 + x_2z_4] \in D_{2,2} : [x_1,\,x_2]\in \mathbb{P}^1\right\}\nonumber \\
&&g([x_1,\,x_2,\, x_1z_1 + x_2z_3,\, x_1z_2 + x_2z_4]) = [A,\,B] \text{ where }\nonumber\\
&&A=(x_1^2,\, x_1x_2,\, x_2(x_1z_1 + x_2z_3)),\nonumber\\
&&B= ((x_1z_1 + x_2z_3)^2 ,\,x_1(x_1z_2 + x_2z_4),\,  (x_1z_1 + x_2z_3)(x_1z_2 + x_2z_4)).\nonumber
\end{eqnarray}
Denote $f(Z) = \left(
                 \begin{array}{ccc}
                   L_1& M_1& N_1 \\
                    L_2& M_2& N_2 \\
                    L_3& M_3& N_3 \\
                 \end{array},
               \right)$
then
\begin{eqnarray}
 x_1^2L_1+x_1x_2L_2 + x_2(x_1z_1 + x_2z_3)L_3 &=& (x_1z_1 + x_2z_3)^2 \nonumber\\
 x_1^2M_1+x_1x_2M_2 + x_2(x_1z_1 + x_2z_3)M_3 &=&x_1(x_1z_2 + x_2z_4) \nonumber\\
 x_1^2N_1+x_1x_2N_2 + x_2(x_1z_1 + x_2z_3)N_3&=&(x_1z_1 + x_2z_3)(x_1z_2 + x_2z_4) \nonumber
\end{eqnarray}
for all $[x_1,x_2]\in \mathbb{P}^1$. Hence we obtain \eqref{map me}.

Consider the case \eqref{standard} in Theorem \ref{classify ball}.
Suppose that for simplicity suppose that $g : D_{2,2}\rightarrow D_{3,3}$ is $g(x) = x_1$.
This method can be applied to general $r$ and homogeneous monomial linear map $g$.
The induce map $f :\Omega_{2,2}\rightarrow \Omega_{3,3}$ has the form
$$f\left(\left(
         \begin{array}{cc}
           z_1 & z_2 \\
           z_3 & z_4 \\
         \end{array}
       \right)\right)
       = \left(\begin{array}{ccc}
           z_1-L & z_2-M &1-N \\
           z_3 & z_4 & 0 \\
           L & M & N \\
         \end{array}
       \right)
       $$
for some holomorphic functions $L,M,N$ on $\Omega_{2,2}$.
Since $ I_{3,3} - f(Z)f(Z) > 0$, for $V = (v,1,0)$ and $Z$ in the Shilov boundary of $\Omega_{2,2}$,
\begin{eqnarray} \label{at shilov boundary}
0 &\leq& V( I_{3,3} - f(Z)f(Z) )V^* \nonumber\\
&=& 1 + v^2 - | v(z_1-L) + z_3|^2 -| v(z_2-M) + z_4|^2 - |v(1-N)|^2\nonumber\\
&=& -v^2(|L|^2 + |M|^2 + |N|^2) + (\text{ first order term in } v, \overline{v})
\end{eqnarray}
As $v\rightarrow \infty$, \eqref{at shilov boundary} tends to $-\infty$,
if one of $L,M,N$ are nonzero at $Z$.
This implies that $L,M,N$ should be zero on the Shilov boundary of $\Omega_{2,2}$ and
hence $L = M = N =0$ on $\Omega_{2,2}$.
However in this case $f$ is not a holomorphic map into $\Omega_{3,3}$.
Thus there is no proper holomorphic map induced from $g$ with nonzero $\phi$.

If $g(x) = [x_1,x_2,0,x_3,x_4,0]$, the induced map $f$ is given by $$f(Z) = \left(
         \begin{array}{cc}
           Z & 0 \\
           k(Z) & h(Z) \\
         \end{array}
       \right)$$
for some holomorphic functions $k_1,k_2,h$ on $\Omega_{2,2}$ where $k=(k_1,k_2)$.
Then by considering $f$ on the Shilov boundary as the same method above, $k$ should be zero.
Hence $f$ should be of the form \ref{standard map} in Theorem \ref{main theorem}.

\begin{remark}
Note that in generally, for one $g$, there could be several $f$. However, in case of $D_{2,2},\, D_{3,3}$ and $\Omega_{2,2},\,\Omega_{3,3}$, there is a unique $f$ for each $g$ since the number of equations and the number of unknowns are same.
\end{remark}

\subsection{More examples}
\begin{example}
If the difference of dimension gets bigger, then there are infinite number of proper holomorphic maps which are not rationally equivalent up to the automorphisms.
Consider the proper holomorphic maps from $D_{2,2}$ to $D_{4,4}$. As the same method, let $P_t(x) = (x_1+x_2-x_3-x_4)\, Q_P(x)$ where
$Q_{P_t}(x) = x_1+x_2+x_3+x_4-t(x_2+x_4)$ where $0 \leq t\leq 1$. Then
$$P_t(x) = x_1^2 + (2-t)x_1x_2 + (1-t)x_2^2 + t x_2x_3 - x_3^2 - (2-t)x_3x_4 - (1-t)x_4^2 - t x_1x_4$$ and the induced proper holomorphic maps are
$$g_t([z_1,z_2,z_3,z_4]) = [ z_1^2 ,\sqrt{2-t}z_1z_2, \sqrt{1-t}z_2^2, \sqrt{t} z_2z_3, z_3^2 ,\sqrt{2-t}z_3z_4,\sqrt{1-t}z_4^2,\sqrt{t} z_1z_4 ].$$
This $g_t$ satisfies the condition in Proposition \ref{indep vectors} and
 $g_t$ induces infinite number of proper holomorphic maps from $f_t : \Omega_{2,2}\rightarrow \Omega_{4,4}$ which is defined by
\begin{equation}\label{inequivalent example}
  \left(
         \begin{array}{cc}
           z_1 & z_2 \\
           z_3 & z_4 \\
         \end{array}
       \right) \mapsto \left(
         \begin{array}{cccc}
           z_1^2 & \sqrt{2-t}z_1z_2 & \sqrt{1-t} z_2^2& \sqrt{t}z_2 \\
           \sqrt{2-t}z_1z_3 & \frac{\sqrt{2-t}-t}{\sqrt{2-t}}z_1z_4 + z_2z_3& 2\sqrt{\frac{1-t}{2-t}}z_2z_4& \sqrt{\frac{t}{2-t}}z_4 \\
           \sqrt{1-t}z_3^2 & \frac{\sqrt{2-t}-t}{\sqrt{1-t}}z_3z_4& z_4^2& 0 \\
           \sqrt{t}z_3 & \sqrt{t}z_4&0&0\\
         \end{array}
       \right).
\end{equation}
\begin{remark}
\eqref{map me} and \eqref{map ng} are homotopic to each other by \eqref{inequivalent example}.
\end{remark}
\end{example}

\begin{example}
There are proper holomorphic map $f: \Omega_{2,2}\rightarrow \Omega_{4,4}$ which has degree 3 polynomial in components. Let $Q_P(x) = x_1^2 + x_1x_3 + x_3^2$. Then $P(x) = x_1^3 + x_1^2 x_2 + x_1x_2x_3 + x_2x_3^3 -x_3^2 -x_1^2 x_4 - x_1x_3x_4 - x_3^2x_4$ and hence $$g([x_1,\,x_2,\,x_3,\,x_4]) = [x_1^3,\, x_1^2 x_2,\, x_1x_2x_3,\, x_2x_3^3,\, x_3^2,\, x_1^2 x_4,\, x_1x_3x_4,\, x_3^2x_4].$$ The corresponding proper holomorphic map $f :\Omega_{2,2}\rightarrow \Omega_{4,4}$ is
$$  \left(
         \begin{array}{cc}
           z_1 & z_2 \\
           z_3 & z_4 \\
         \end{array}
       \right) \mapsto
        \left(
         \begin{array}{cccc}
           z_1^3 & z_2 & z_1z_2& z_1^2z_2 \\
           z_1^2z_3^2 & z_4 & z_2z_3 & z_1z_2z_3 - z_1^2z_4 + z_1^2z_3z_4 \\
           3z_1z_3 -2z_1z_3^2 & 0& 0 &z_2z_3 + 2z_1z_4-2z_1z_3 z_4 \\
           z_3^2 & 0&0&z_3 z_4 \\
         \end{array}
       \right).
$$
\end{example}

\begin{example}[Generalized Whitney map]
Consider
$$ P(z) = (x_1 +\dots + x_r -x_{r+1} -\dots - x_{r+s} )(x_1+x_{r+1}).$$
This polynomial induces the proper meromorphic map $g : D_{r,s} \rightarrow D_{2r-1,2s-1}$ defined by
\begin{eqnarray*}
g([z_1,\dots, z_r, w_1,\dots, w_s]) = [z_1^2,\, z_1z_2, \dots , z_1z_r,\, w_1z_2,\dots, w_1z_r, \\
w_1^2,\, w_1w_2, \dots, w_1w_s,\, z_1w_2, \dots z_1w_s].
\end{eqnarray*}
$g$ induces the proper holomorphic map $f^w : \Omega_{r,s} \rightarrow \Omega_{2r-1,2s-1}$ defined by
\begin{equation} \label{generalized whitney map}
 \left(
  \begin{array}{ccc}
    z_{11} & \dots & z_{1s} \\
    \vdots & \ddots & \vdots \\
     z_{r1} & \dots & z_{rs} \\
  \end{array}
\right) \mapsto \left(
  \begin{array}{ccccccc}
    z_{11}^2  & z_{11}z_{12}&\dots& z_{11}z_{1s}& z_{12}& \dots & z_{1s}   \\
     z_{21}z_{11} & z_{21}z_{12}& \dots& z_{21}z_{1r} &z_{22}& \dots& z_{2s}  \\
    \vdots & \vdots &\ddots & \vdots&\vdots& \ddots &\vdots \\
    z_{r1}z_{11} & z_{r1}z_{12} &\dots & z_{r1}z_{1s}& z_{r2}& \dots& z_{rs} \\
    z_{21} & z_{22} &\dots & z_{2s}& 0& \dots & 0 \\
     \vdots& \vdots & \ddots& \vdots & \vdots& \ddots & \vdots\\
     z_{r1}& z_{r2}&\dots &z_{rs}&0& \dots& 0\\
  \end{array}
\right)
\end{equation}
This is generalized proper holomorphic map of \eqref{map me} : if $r=s=2$, $f^w$ is same with \eqref{map me} in Theorem \ref{main theorem}.
\end{example}
\begin{example}
Consider the proper holomorphic maps from $D_{2,2}$ to $D_{3,4}$.
Let $P_t(x) =  (x_1+x_2-x_3-x_4)\, Q_P(x)$ where
$Q_{P_t}(x) = x_1+tx_3$ where $0 \leq t\leq 1$. Then proper rational map $g_t : D_{2,2}\rightarrow D_{3,4}$ is given by
$$
g_t([x_1,\,x_2,\,x_3,\,x_4]) = [ x_1^2,\, x_1x_2,\, \sqrt{t}x_2x_3,\, \sqrt{t}x_3^2, \, \sqrt{t}x_3x_4,\, \sqrt{1-t}x_1x_3, \, x_1x_4 ].
$$
The induced proper holomorphic maps $f_t :\Omega_{2,2} \rightarrow \Omega_{3,4}$ is given by
\begin{equation}\label{inequivalent example2}
 \left(
         \begin{array}{cc}
           z_1 & z_2 \\
           z_3 & z_4 \\
         \end{array}
       \right) \mapsto
\left(
         \begin{array}{cccc}
           \sqrt{t}z_1^2 & \sqrt{t}z_1z_2 & \sqrt{1-t} z_1& z_2 \\
           \sqrt{t}z_1z_3 & \sqrt{t}z_2z_3& \sqrt{1-t}z_3& z_4 \\
           z_3& z_4& 0 & 0 \\
         \end{array}
       \right).
\end{equation}
Furthermore we can generalize proper holomorphic map \eqref{inequivalent example2} to
$F_t :\Omega_{r,s}\rightarrow \Omega_{2r-1,2s}$ given by for $Z = (z_{ij})_{1\leq i\leq r,\, 1\leq j\leq s}$,
\begin{equation} \label{generalized whitney map}
 Z
\mapsto\left(
  \begin{array}{cccccccc}
    \sqrt{t}z_{11}^2  & \sqrt{t}z_{11}z_{12}&\dots& \sqrt{t}z_{11}z_{1s}&\sqrt{1-t}z_{11}& z_{12}& \dots & z_{1s}   \\
    \sqrt{t}z_{11} z_{21} & \sqrt{t}z_{21}z_{12}& \dots& \sqrt{t}z_{21}z_{1r} &\sqrt{1-t}z_{21}&z_{22}& \dots& z_{2s}  \\
    \vdots & \vdots &\ddots & \vdots&\vdots& \ddots &\vdots \\
    \sqrt{t}z_{11}z_{r1} & \sqrt{t}z_{r1}z_{12} &\dots & \sqrt{t}z_{r1}z_{1s}&\sqrt{1-t}z_{r1}& z_{r2}& \dots& z_{rs} \\
    z_{21} & z_{22} &\dots & z_{2s}& 0&0& \dots & 0 \\
     \vdots& \vdots & \ddots& \vdots &\vdots& \vdots& \ddots & \vdots\\
     z_{r1}& z_{r2}&\dots &z_{rs}&0&0& \dots& 0\\
  \end{array}
\right)
\end{equation}
\end{example}
\begin{example}
Consider
$$
P(x) = (x_1+\dots + x_r -y_1-\dots -y_s)(x_1+\dots + x_r + y_1 + \dots + y_s)
$$
and the induced rational map $g : D_{r,s} \rightarrow D_{r',s'} $ where $ r' = \frac{1}{2}r(r+1)$, $ s'=\frac{1}{2}s(s+1)$ defined by
\begin{eqnarray}
 g([x_1, \dots, x_r, y_{1},\dots, y_{s}]) &=& [x_1^2,\dots, x_r^2,\, \sqrt{2}x_1x_2, \dots, \sqrt{2}x_ix_j,\dots, \sqrt{2}x_{r-1}x_r,\nonumber \\
 &&y_1^2,\dots, y_s^2,\, \sqrt{2}y_1y_2, \dots, \sqrt{2}y_ky_l,\dots, \sqrt{2}y_{s-1}y_s]\nonumber
\end{eqnarray}
where $i,j,k$ and $l$ trace over $1\leq i<j \leq r$ and $1\leq k<l \leq s$. Then the induced proper holomorphic map $f : \Omega_{r,s} \rightarrow \Omega_{r',s'}$ is
\begin{eqnarray*}
f\left(
\left(
  \begin{array}{ccc}
    z_{11} & \dots & z_{1s} \\
    \vdots & \ddots & \vdots \\
    z_{r1} & \dots & z_{rs} \\
  \end{array}
\right)
\right)
= (M,N)\,\,
\text{ where }\,\,
M = \left(
  \begin{array}{ccc}
    z_{11}^2 & \dots & z_{1s}^2  \\
    \vdots &  & \vdots  \\
    z_{r1}^2 & \dots & z_{rs}^2 \\
    \sqrt{2}z_{11}z_{21} & \dots & \sqrt{2}z_{1s}z_{2s} \\
    \vdots &  & \vdots  \\
    \sqrt{2}z_{i1}z_{j1} & \dots & \sqrt{2}z_{is}z_{js} \\
    \vdots &  & \vdots \\
 \sqrt{2}z_{r-1 1}z_{r1} & \dots & \sqrt{2}z_{r-1 s}z_{rs}  \\
  \end{array}
\right)
\end{eqnarray*}
$$
N= \left(
  \begin{array}{ccccc}
  \sqrt{2}z_{11} z_{12} & \dots & \sqrt{2}z_{1k}z_{1l}  & \dots & \sqrt{2}z_{1 s-1}z_{1 s} \\
     \vdots &  & \vdots &  & \vdots \\
      \sqrt{2}z_{r1} z_{r2} & \dots & \sqrt{2}z_{r k}z_{r l}  & \dots & \sqrt{2}z_{r s-1}z_{rs} \\
    z_{11}z_{22} + z_{12}z_{21} & \dots & z_{1k}z_{2l}+z_{2k}z_{1l} & \dots & z_{1{s-1}}z_{2 s} +z_{2{s-1}}z_{1s} \\
     \vdots &  & \vdots &  & \vdots \\
    z_{i1}z_{j2} + z_{j1}z_{i2} & \dots & z_{ik}z_{jl}+z_{jk}z_{il} & \dots & z_{i{s-1}}z_{js}+z_{j{s-1}}z_{is} \\
     \vdots &  & \vdots &  & \vdots \\
  z_{{r-1} 1} z_{r2} + z_{r1}z_{r-1 2} & \dots & z_{r-1 k}z_{rl}+z_{rk} z_{r-1 l} & \dots & z_{r-1 s-1}z_{rs}+z_{r s-1}z_{r-1 s} \\
  \end{array}
\right).
$$
Here $i,j,k,l$ trace over $1\leq i < j \leq r$ and $1\leq k < l \leq r$.
\end{example}

\end{document}